\newtheorem{theorem}{Theorem}[section]
\newtheorem{lemma}[theorem]{Lemma}
\newtheorem{corollary}[theorem]{Corollary}
\newtheorem{proposition}[theorem]{Proposition}
\theoremstyle{definition}
\newtheorem{definition}[theorem]{Definition}
\newtheorem{definition-proposition}[theorem]{Definition-Proposition}
\newtheorem{remark}[theorem]{Remark}
\newtheorem{notation}[theorem]{Notation}
\newtheorem{example}[theorem]{Example}
\def\C{\mathcal{C}}
\def\D{\mathcal{D}}
\def\H{\mathcal {H}}
\def\X{\mathscr{X}}
\def\Y{\mathscr{Y}}
\def\F{\mathcal {F}}
\def \text{\mbox}
\providecommand{\add}{\mathop{\rm add}\nolimits}%
\providecommand{\Ext}{\mathop{\rm Ext}\nolimits}%
\providecommand{\Hom}{\mathop{\rm Hom}\nolimits}%
\renewcommand{\mod}{\mathop{\rm mod}\nolimits}%
\providecommand{\ind}{\mathop{\rm ind}\nolimits}%
\def\XX{\widetilde{\X}}
\begin{document}

\title{Torsion pairs in repetitive cluster categories of type $A_n$}

\author[Chang]{Huimin Chang}
\address{
Department of Applied Mathematics
The Open University of China
100039 Beijing
China
}
\email{changhm@ouchn.edu.cn}
\begin{abstract}

We give a complete classification of torsion pairs in repetitive cluster categories of type $A_n$, which were defined by Zhu \cite{Z1} as the orbit categories $D^b(\mod KA_n)/\langle(\tau^{-1}[1])^p\rangle$, via certain configurations of diagonals, called Ptolemy diagrams. As applications, we classify rigid subcategories of $D^b(\mod KA_n)/\langle(\tau^{-1}[1])^p\rangle$, which gives Lamberti's classification of cluster tilting subcategories \cite{L1}. When $p=1$, this generalizes the work of Holm, J{\o}rgensen and Rubey for the classification of torsion pairs in cluster categories of type $A_{n}$ \cite{HJR1}.
\end{abstract}

\subjclass[2020]{18E99; 18D99; 18E30}

\keywords{repetitive cluster categories of type $A_n$; torsion pair; Ptolemy diagram}

\thanks{This work was supported by the NSF of China (Grant No.\;12301047). }

\maketitle

\tableofcontents
\section{Introduction}

Torsion theory is a classic subject in algebra. The notion of torsion pairs in abelian categories was first introduced by Dickson \cite{D}, and the triangulated version goes back to Iyama and Yoshino \cite{IY} to study cluster tilting subcategories in a triangulated category. Cotorsion pairs in a triangulated category were used by Nakaoka \cite{N} to unify the abelian structures arising from $t$-structures \cite{BBD} and from cluster tilting subcategories \cite{KR,KZ,IY}. Torsion pairs and cotorsion pairs in a triangulated category can be transformed into each other by shifting the torsion-free parts. Hence, classifications of torsion pairs and cotorsion pairs are equivalent in a triangulated category. 

In fact, there is a series of articles that have studied the classification of (co)torsion pairs in triangulated categories \cite{CZZ1,CZ,CZ1,GHJ,HJR1,HJR2,HJR3,Ng,ZZZ}. One of the important methods to classify (co)torsion pairs is to use the geometric model of the category. For example, Ng \cite{Ng} classified torsion pairs in the cluster category of type $A_\infty$ \cite{HJ2} via certain configurations of arcs of the infinity-gon. Holm, J{\o}rgensen and Rubey \cite{HJR1,HJR2,HJR3} classified torsion pairs in the cluster category of type $A_{n}$, in the cluster tube and in the cluster category of type $D_n$ via Ptolemy diagrams of a regular $(n+3)$-gon, periodic Ptolemy diagrams of the infinity-gon and Ptolemy diagrams of a regular $2n$-gon, respectively. Zhang, Zhou and Zhu \cite{ZZZ} classified cotorsion pairs in the cluster category of a marked surface via paintings of the surface. Chang and Zhu \cite{CZ} classified torsion pairs in certain finite $2$-Calabi-Yau triangulated categories with maximal rigid objects, via periodic Ptolemy diagrams of a regular polygon. Chang, Zhou and Zhu \cite{CZZ1} classified cotorsion pairs in the cluster categories of type $A^{\infty}_{\infty}$ via certain configurations of arcs of an infinite strip with marked points  in the plane, called $\tau$-compact Ptolemy diagrams. Gratz, Holm and J{\o}rgensen \cite{GHJ} gave a classification of torsion pairs in Igusa-Todorov's cluster categories of type $A_{\infty}$ via their combinatorial models. Recently, Chang and Zhu \cite{CZ1} classified cotorsion pairs in higher cluster categories of type $A_n$ via Ptolemy diagrams of a regular polygon.

Cluster categories defined by Buan, Marsh, Reineke, Reiten and Todorov \cite{BMRRT}, and also by Caldero, Chapoton and Schiffler for type $A_n$ \cite{CCS}, are the orbit categories $\D^b(H)/\langle F\rangle$ of derived categories $\D^b(H)$ of a hereditary algebra $H$ by the automorphism group generated by $F=\tau^{-1}[1]$, where $\tau$ is the Auslander-Reiten translation in $\D^b(H)$ and $[1]$ is the shift functor of $\D^b(H)$. In order to study cluster-tilted algebras and their intermediate coverings, Zhu \cite{Z1} introduced repetitive cluster categories, defined as the orbit categories $\D^b(\H)/\langle(\tau^{-1}[1])^p\rangle$ ($1\leq p\in\mathbb{N}$), where $\H$ is a hereditary abelian category with tilting objects. Furthermore, Lamberti \cite{L1} gave a geometric model of $\C_{n,p}\colon=D^b(\mod KA_n)/\langle(\tau^{-1}[1])^p\rangle$ by means of diagonals in the so-called repetitive polygon $\Pi^p$. This generalises the construction of Caldero, Chapoton and Schiffler for $p=1$ \cite{CCS}. The particularity of the repetitive cluster categories is that they are fractionally Calabi-Yau dimension $\frac{2p}{p}$, and the fraction cannot be simplified. It is precisely in this point that the category $\C_{n,p}$ differs from the case when $p=1$.

In this paper, we introduce the definition of Ptolemy diagrams of a regular $p(n+2)$-gon. We show that there is a bijection between the torsion pairs in $\C_{n,p}$ and the Ptolemy diagrams of a $p(n+2)$-gon. As applications, we classify rigid subcategories of $\C_{n,p}$,  which deduces Lamberti's classification of cluster tilting subcategories \cite{L1}. When $p=1$, this generalizes the work of Holm, J{\o}rgensen and Rubey for the classification of torsion pairs in cluster categories of type $A_{n}$ \cite{HJR1} .

The paper is organized as follows. In Section 2, we review the notions of $\C_{n,p}$, and its geometric realization in Section 3. In Section 4, we define Ptolemy diagrams of a $p(n+2)$-gon and give a classification of torsion pairs in $\C_{n,p}$. Some applications are given in the last section.

\subsection{Notations} 
Throughout this paper, $K$ stands for an algebraically closed field. Any subcategory of a category is assumed to be full and closed under taking isomorphisms, finite direct sums and direct summands. In this way, any subcategory is determined by the indecomposable objects in this subcategory. For two subcategories $\X,\Y$ of a triangulated category $\C$, $\Hom_{\C}(\X,\Y)=0$ means $\Hom_{\C}(X,Y)=0$ for any $X\in \X$ and any $Y\in \Y$. For two subcategories $\X,\Y$ of a triangulated category $\C$, denote by $\X\ast\Y$  the  subcategory of $\C$ whose objects are $M$ which fits into a triangle $$X\to M\to Y\to X[1]$$ with $X\in\X$ and $Y\in\Y$. A subcategory $\X$ is called  extension closed  provided that $\X\ast\X \subseteq\X.$ For an object $M\in\C$, let $\add M$ denote the additive closure of $M$. We denote by $\X^\perp$ (resp. $^\perp\X$) the subcategory whose objects are $M\in\C$ satisfying $\Hom_{\C}(\X,M)=0$ (resp. $\Hom_{\C}(M,\X)=0$). We use $\Ext_{\C}^1(X,Y)$ to denote $\Hom_{\C}(X,Y[1])$, where $[1]$ is the shift functor of $\C$. Throughout the paper, we denote by $\D$ the category of $\D^b(\mod KA_n)$.

\section{Repetitive cluster categories of type $A_{n}$}
\subsection{Serre duality and Calabi-Yau categories}
\begin{definition}
A $K$-linear triangulated category $\C$ has a Serre functor if it is equipped with an auto-equivalence $\nu\colon \C\rightarrow \C$ together with bifunctorial isomorphisms
$$D\Hom_{\C}(X,Y)\cong\Hom_{\C}(Y,\nu X),$$
for each $X,Y\in\C$. Moreover, if $\C$ admits a Serre functor, we say that $\C$ has Serre duality. 
\end{definition}
Note that for a hereditary abelian category $\H$, if $\H$ has tilting objects, then $\H$ has Serre duality, and also Auslander-Reiten translate $\tau$ (shortly AR-translate) \cite{HRS}. Thus, a Serre functor $\nu$ of $\D$ exists, it is unique up to isomorphism and $\nu=\tau[1]$.
\begin{definition}
A triangulated category $\C$ with Serre functor $\nu$ is called $\frac{m}{n}$-Calabi-Yau ($\frac{m}{n}$-CY) for $n,m>0$ provided there is an isomorphism of triangle functors 
$$\nu^n\cong [m]$$
\end{definition}
Note that a $\frac{m}{n}$-CY category is also of $\frac{mt}{nt}$-CY, $t\in\mathbb{Z}$. However, the converse is not always true, this means the fraction cannot be simplified in general.
\subsection{Orbit categories}
Let $\C$ be a triangulated category and $F\colon \C\rightarrow \C$ be an autoequivalence. The orbit category $\mathcal O_{F}:= \C/\langle F\rangle$ has the same objects as $\C$ and its morphisms from $X$ to $Y$ are in bijection with 
$$\bigoplus_{i\in\ \mathbb{Z}} \mathrm{Hom}_{\C}(X,F^{i}Y).$$ 
In general, the orbit category is not triangulated \cite{K}. However, there is a sufficient set of conditions such that the orbit category is triangulated. Let $\C=D^b(\H)$ be the bounded derived category of a hereditary abelian $K$-category $\H$, and $F\colon \C\rightarrow \C$ be an autoequivalence.  Assume that the following hypotheses hold:
\begin{itemize}
  \item [(g1)] For each indecomposable $U$ of $\H$, only finitely many objects $F^iU, i\in\mathbb{Z}$,
lie in $\H$.
  \item [(g2)] There is an integer $N\geq 0$ such that the $F$-orbit of each indecomposable of
$\C$ contains an object $U[n]$, for some $0\leq n\leq N$ and some indecomposable object $U$ of $\H$.
\end{itemize}
Then the orbit category $\D/\langle F\rangle$ admits a natural triangulated structure such that
the projection functor $\pi\colon\D\rightarrow\D/\langle F\rangle$ is triangulated. In particular, the orbit category $\D/\langle F\rangle$ is called a cluster category when $F=\tau^{-1}[1]$, and the orbit category $\D/\langle F\rangle$ is called a repetitive cluster category when $F=(\tau^{-1}[1])^p$. 
\subsection{Repetitive cluster categories $\C_{n,p}$}
Throughout the paper, we study orbit categories of the form
$$\C_{n,p}=\D/\langle(\tau^{-1}[1])^p\rangle.$$
We call $\C_{n,p}$ the repetitive cluster categories of type $A_n$. The class of objects is the same as the class of objects in $\D$ and the space of morphisms is given by 
$$\Hom_{\C_{n,p}}(X,Y)=\bigoplus_{i\in\ \mathbb{Z}} \Hom_{\D}(X,(\tau^{-p}[p])^{i}Y).$$
Observe that when $p=1$, one gets the classical cluster category which we simply denote by $\C_{n}$. Furthermore, there is a natural projection functor $\pi_p\colon\D\rightarrow\C_{n,p}$, and the projection functor $\pi_p$ is simply denoted by $\pi$ if $p=1$. Moreover, one can define the projection functor $\eta_p\colon\C_{n,p}\rightarrow\C_{n}$ which sends an object $X$ in $\C_{n,p}$ to an object $X$ in $\C_{n}$, and $\phi\colon X\rightarrow Y$ in $\C_{n,p}$ to the morphism $\phi\colon X\rightarrow Y$ in $\C_{n}$. It is easy to check that $\pi=\eta_p\circ\pi_p$, i.e. the following diagram commutes:
$$
{
\xymatrix@-7mm@C-0.01cm{
     &&\D \ar[rdd]_{\pi_p}\ar[rr]^{\pi} & & \C_{n} \\
 \\
 &&&\C_{n,p} \ar[ruu]_{\eta_p} \\
 \\
}
}
$$
Thus, the repetitive cluster categories $\C_{n,p}$ serve as intermediate categories between the cluster categories $\C_{n}$ and derived categories $\D$. In the following, we summarize some known facts about $\C_{n,p}$ from \cite{L1}. 
\begin{figure}
\begin{center}
\begin{tikzpicture}
\draw (-3,-1)-- (0,-1)--(-1,1)--(-2,1)--(-3,-1);
\draw (-0.8,1)--(2.2,1)--(1.2,-1)--(0.2,-1)--(-0.8,1);
\node at (2.4,0){$\cdots$};
\node at (-1.5,0){$\F$};
\node at (0.7,0){$\F_2$};
\node at (4.2,0){$\F_{p-1}$};
\node at (6.4,0){$\F_{p}$};
\draw (2.7,1)--(5.7,1)--(4.7,-1)--(3.7,-1)--(2.7,1);
\draw (4.9,-1)-- (7.9,-1)--(6.9,1)--(5.9,1)--(4.9,-1);
\end{tikzpicture}
\end{center}
\caption{Partition of the fundamental domain of $\tau^{-p}[p]$ for an odd value $p$}
\label{01}
\end{figure}
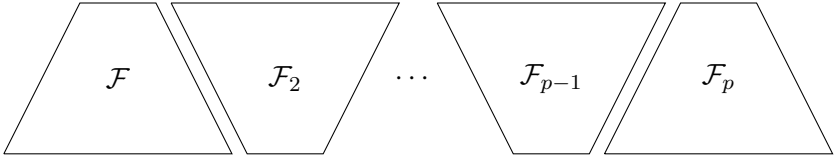

Let $\F=\F_1$ be the fundamental domain for the functor $\tau^{-1}[1]$ in $\D$ given by the isoclasses of indecomposable objects in $\C_n$, and $\F_i$ be the ($\tau^{-1}[1])^{i-1}$-shifts of $\F$ with $1\leq i\leq p$. We can draw the fundamental domain for the functor $\tau^{-p}[p]$ as in Fig. \ref{01}.
\begin{proposition}\cite[Lemma 2.7]{L1} The following statements hold for $\C_{n,p}$.
\begin{enumerate}
\item The projection functors $\pi_p\colon\D\rightarrow\C_{n,p}$ and $\eta_p\colon\C_{n,p}\rightarrow\C_{n}$ are trialgle functors.
\item The category $\C_{n,p}$ is triangulated with Serre functor $\tau[1]$ induced from $\D$.
\item The category $\C_{n,p}$ is a Krull-Schmidt and $\frac{2p}{p}$-CY category, i.e.
$$(\tau[1])^p\cong [2p].$$
\item $\ind(\C_{n,p})=\bigcup_{i=1}^{p}(\F_i)$.
\end{enumerate}
\end{proposition}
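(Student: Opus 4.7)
The plan is to reduce each of the four parts to properties of $\D = D^b(\mod KA_n)$ together with Keller's orbit-category construction recalled just before the proposition. For (1), I would first verify the hypotheses (g1)--(g2) for the autoequivalence $F=(\tau^{-1}[1])^p$ of $\D$. Since the AR-quiver of $\D$ is $\mathbb{Z}A_n$ and $\tau^{-1}[1]$ acts there as a combinatorial translation, only finitely many $F^iU$ lie in $\mod KA_n$ for any indecomposable $U$, giving (g1), and (g2) holds with $N=1$. Keller's theorem then equips $\C_{n,p}$ with a triangulated structure making $\pi_p$ a triangle functor. For $\eta_p$, the inclusion $\langle(\tau^{-1}[1])^p\rangle\subseteq\langle\tau^{-1}[1]\rangle$ gives a canonical factorisation $\pi=\eta_p\circ\pi_p$; applying Keller's theorem once more to the induced action of $\tau^{-1}[1]$ on $\C_{n,p}$ shows $\eta_p$ is a triangle functor.

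For (2), the Serre functor $\tau[1]$ of $\D$ commutes with $F$, so it descends to an autoequivalence $\bar\nu$ of $\C_{n,p}$; the Serre duality isomorphism follows from that in $\D$ by summing over $i\in\mathbb{Z}$ the identities $D\Hom_{\D}(X,F^iY)\cong\Hom_{\D}(F^iY,\tau[1]X)$ and re-indexing. For (3), the Krull--Schmidt property follows because each $\Hom_{\C_{n,p}}(X,Y)$ is a finite direct sum of finite-dimensional $K$-spaces and $\C_{n,p}$ inherits idempotent completeness from $\D$; the basic Calabi--Yau identity is then formal, since by construction $\tau^{-p}[p]\cong\mathrm{id}$ in $\C_{n,p}$, hence $\tau^p\cong[p]$ and $(\tau[1])^p=\tau^p[p]\cong[2p]$. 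For (4), each $F$-orbit of indecomposables in $\D$ meets $\bigcup_{i=1}^{p}\F_i$ in exactly one object: $\F$ is a fundamental domain for $\tau^{-1}[1]$ on $\C_n$, the $\F_i$ are its successive $(\tau^{-1}[1])^{i-1}$-shifts, and together they tile a fundamental domain for $F$ on $\D$. Since $\pi_p$ preserves and reflects indecomposability (by Krull--Schmidt), this yields $\ind(\C_{n,p})=\bigcup_{i=1}^{p}\F_i$.

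The main obstacle I expect is showing the fraction $\frac{2p}{p}$ is irreducible, that is, that there is no $0<q<p$ with $(\tau[1])^q\cong[m]$ as triangle autoequivalences of $\C_{n,p}$. My plan is to lift such a hypothetical isomorphism to $\D$: evaluated on an indecomposable $X$ it would force $\tau^q[q-m]X\cong F^kX=\tau^{-kp}[kp]X$ for some $k\in\mathbb{Z}$, hence $\tau^{q+kp}[q-m-kp]X\cong X$ in $\D$. Tracking this identity across the $\mathbb{Z}A_n$ AR-quiver of $\D$, where $\tau$ and $[1]$ act by two independent translations, forces $q+kp=0$ and $q-m-kp=0$, so $p\mid q$, contradicting $0<q<p$. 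This is the one step that uses genuine information about $A_n$ rather than formal orbit-category generalities.
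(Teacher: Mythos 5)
The paper itself offers no proof of this proposition: it is quoted from Lamberti \cite[Lemma 2.7]{L1}, so your argument can only be measured against the standard orbit-category proof, and in outline it matches it. Your treatment of (1)--(4) is essentially right: check Keller's hypotheses (g1)--(g2) for $F=(\tau^{-1}[1])^p$, descend the Serre functor since it commutes with $F$, get Hom-finiteness and Krull--Schmidt from the finite orbit sums, read off $(\tau[1])^p=\tau^p[p]\cong[2p]$ from $\tau^{-p}[p]\cong\mathrm{id}$, and identify $\bigcup_{i=1}^p\F_i$ as a fundamental domain. Two small caveats: for $p>1$ the bound in (g2) is not $N=1$ but of order $p$ (each application of $\tau^{-1}[1]$ can raise the degree, so the fundamental domain $\F_1\cup\cdots\cup\F_p$ spans roughly $p$ shifts), and you cannot literally ``apply Keller's theorem once more'' to $\C_{n,p}$, which is not the derived category of a hereditary category; for $\eta_p$ you should instead invoke Keller's statement about orbit categories by nested groups of autoequivalences, or argue directly that every triangle of $\C_{n,p}$ is isomorphic to the image of a triangle of $\D$, whose image under $\pi=\eta_p\circ\pi_p$ is a triangle of $\C_n$.

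The genuine error is in your final paragraph, and you should be aware of it even though it concerns a claim that is not actually part of the proposition (irreducibility of the fraction $\tfrac{2p}{p}$ is only a remark in the introduction, not an assertion of the statement you were asked to prove). Your argument assumes that $\tau$ and $[1]$ act as two independent translations on the AR-quiver $\mathbb{Z}A_n$ of $\D$, so that $\tau^{a}[b]X\cong X$ forces $a=b=0$. This is false: $\D=D^b(\mod KA_n)$ is itself fractionally Calabi--Yau with $(\tau[1])^{n+1}\cong[n-1]$, equivalently $\tau^{n+1}\cong[-2]$ (for $A_1$ one has $\tau=[-1]$), so the group generated by $\tau$ and $[1]$ has a nontrivial relation and the step ``forces $q+kp=0$ and $q-m-kp=0$, hence $p\mid q$'' does not follow. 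If you wanted to prove irreducibility you would instead argue, for example, that $(\tau[1])^q\cong[2q]$ would force $\tau^{-q}[q]\cong\mathrm{id}$ on $\C_{n,p}$, which is impossible for $0<q<p$ because $\tau^{-q}[q]$ sends indecomposables of $\F_1$ to indecomposables of $\F_{q+1}$, and these are non-isomorphic in $\C_{n,p}$ since their $(\tau^{-1}[1])^p$-orbits in $\D$ are disjoint. With that paragraph either repaired or simply deleted, your proof of the four stated items stands.
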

\section{Geometric model of $\C_{n,p}$}
In this section, we recall from \cite{L1} a geometric description of $\C_{n,p}$. The geometric model for $\C_{n,p}$ can be viewed as a $p$-covering of the polygon modeling $\C_{n}$ \cite{CCS}. Let $\Pi^p$ be a regular $p(n+2)$-gon, where $n\in\mathbb{N}, p>1$, and $\Pi$ be a $N\colon=n+3$-gon throughout this paper.
\subsection{The repetitive polygon $\Pi^p$}
We number the vertices of $\Pi^p$ clockwise repeating $p$ times the $N$-tuple $1,2,\ldots, N-1,N$ and let correspongding $N\equiv 1$ if $p>1$. For the case $p=1$, let $\Pi^1=\Pi$. Then we denote by $\Pi_1$ a region homotopic to $\Pi$ inside $\Pi^p$ determined by the segments $(1,2),(2,3),\ldots,(N-1,N)$ and the inner arc $(1,N)$. 

Denote by $\rho\colon\Pi^p\rightarrow\Pi^p$ the clockwise rotation through $\frac{2\pi}{p}$ around the center of $\Pi^p$, and set $\Pi_i=\rho^{i-1}(\Pi_1)$ for $1\leq i\leq p$. In this way $\Pi^p$ is divided into $p$ regions. See Fig. \ref{02} for an illustration.
\begin{figure}
\begin{center}
\begin{tikzpicture}[scale=3]
\begin{scope}
    \foreach \x in {-24,0,24,48,72,96,120,144,168,192,216,240,264,288,312}{
        \draw (\x:1 cm) -- (\x + 24: 1cm) -- cycle;
        \draw (72:1cm)..controls(-17:0.2cm)..(312:1cm);
        \draw (72:1cm)..controls(110:0.2cm)..(192:1cm);
        \draw (192:1cm)..controls(240:0.2cm)..(312:1cm);
         \node at (24:1.1cm) {3};
         \node at (48:1.1cm) {2};
         \node at (72:1.1cm) {1=6};
         \node at (0:1.1cm) {4};
         \node at (-24:1.1cm) {5};
         \node at (312:1.1cm) {1=6};
         \node at (288:1.1cm) {2};
         \node at (264:1.1cm) {3};
         \node at (240:1.1cm) {4};
          \node at (216:1.1cm) {5};
         \node at (192:1.15cm) {1=6};
         \node at (168:1.1cm) {2};
          \node at (144:1.1cm) {3};
         \node at (120:1.1cm) {4};
         \node at (96:1.15cm) {5};
         \node at (0:0.7cm) {\huge${\Pi_1}$};
         \node at (120:0.7cm) {\huge${\Pi_3}$};
         \node at (250:0.7cm) {\huge${\Pi_2}$};
}
\end{scope}
\end{tikzpicture}
\caption{The polygon $\Pi^3$ for $n=3$}
\label{02}
\end{center}
\end{figure}
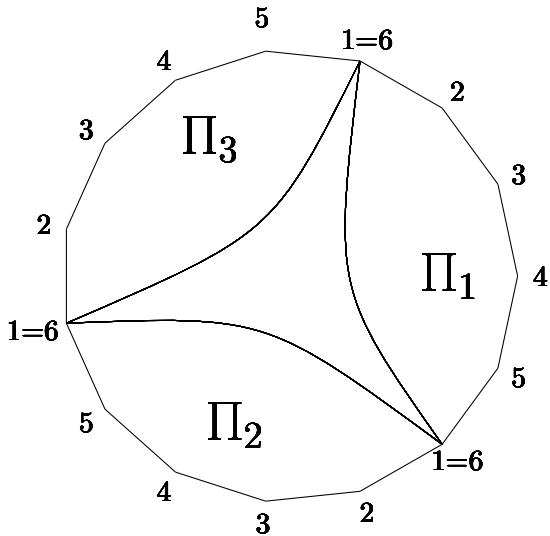

Recall that a diagonal of $\Pi$ is a line between two non-adjacent vertices $i$ and $j$ of $\Pi$, denoted by $(i,j)$.
\begin{definition}
The diagonals of $\Pi^p$ are given by the union of all diagonals of 
$$\Pi_1\cup\Pi_2\cup\ldots\cup\Pi_p.$$
\end{definition}
Denote the diagonals of $\Pi^p$ by the tripe $(i,j,k)$, where $k$ specifies a region $\Pi_k$ inside $\Pi^p$ with $1\leq k\leq p$, and the tuple $(i,j)$ is the diagonal in $\Pi_k$. Note that $(1,N,k)$ for $1\leq k\leq p$ are not diagonals of $\Pi^p$ as they correspond to boundary segments of $\Pi_k$.
\begin{notation}
In the writing of a diagonal $(i,j,k)$ of $\Pi^p$, we understand that the indices $i,j$ have to be taken modulo $N$, and the index $k$ modulo $p$. We always assume $i<j$ in this paper. 
\end{notation}
\subsection{Translation quiver of diagonals of $\Pi^p$}
In order to give a geometric model of the AR-quiver of $\C_{n,p}$, we associate a translation quiver to the diagonals of $\Pi^p$. 
\begin{definition}
Let $\Gamma_{n,p}$ be the quiver of diagonals of $\Pi^p$, whose vertices are the diagonals of $\Pi^p$, and whose arrows are defined as follows.
\begin{enumerate}
  \item For $j\neq N$, 
  $${
\xymatrix@-7mm@C-0.01cm{
     &&(i,j+1,k), \\
 (i,j,k)\ar[rru]\ar[rrd]\\
 &&(i+1,j,k).\\
 \\
}
}
  $$
  \item For $j=N$, $(i,N,k)\rightarrow (i+1,N,k)$.
  \item Furthermore, $(i,N,k)\rightarrow (1,i,k+1)$.
\end{enumerate}
\end{definition}
\begin{remark}
In order to make readers better understanding the definition of arrows, we give an interpretation. Observe that the description (1) and (2) is compatible with the arrow defined in \cite{CCS}. That is, we draw an arrow from $\alpha$ to $\beta$ if $\alpha$ and $\beta$ have a common vertex and the other vertex of $\alpha$ can be rotated clockwise to $\beta$ by one vertex inside $\Pi_k$. The description (3) means that we draw an arrow to the diagonal we can reach with a clockwise rotation around vertex $i$ by one vertex inside $\Pi_k$ composed with $\rho$, i.e. $(i,N,k)$ is linked to $\rho(1,i,k)=(1,i,k+1)$.
\end{remark}
Now we equip $\Gamma_{n,p}$ with a translation $\tau\colon\Gamma_{n,p}\rightarrow\Gamma_{n,p}$ such that $(\Gamma_{n,p},\tau)$ becomes a stable translation quiver.
\begin{definition}
The translation $\tau$ acts on diagonals of $\Pi^p$ is as follows:
\begin{itemize}
  \item If $i\neq 1$, $\tau(i,j,k)=(i-1,j-1,k)$.
  \item If $i=1$, $\tau(1,j,k)=\rho^{-1}(j-1,N,k)=(j-1,N,k-1)$.
\end{itemize}
\end{definition}
Then the pair $(\Gamma_{n,p}, \tau)$ is a stable translation quiver by \cite{L1}.
\subsection{Equivalence of $\Gamma_{n,p}$ and $\C_{n,p}$}
In this subsection, we recall the equivalence of $\Gamma_{n,p}$ and $\C_{n,p}$ from \cite{L1}. 
\begin{lemma}\cite[Proposition 3.7]{L1}\label{lem1}
There is a bijection between indecomposable objects in $\C_{n,p}$ and the diagonals of $\Pi^p$.
\end{lemma}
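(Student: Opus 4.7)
The plan is to stack $p$ copies of the Caldero--Chapoton--Schiffler bijection for $\C_n$ and glue them using Proposition~2.5(4). Writing $X_{(i,j)} \in \F_1 \subseteq \D$ for the CCS representative of the diagonal $(i,j)$ of $\Pi_1$, I would define the candidate map
$$
\Phi\colon (i,j,k) \longmapsto \pi_p\bigl((\tau^{-1}[1])^{k-1} X_{(i,j)}\bigr),
$$
motivated by the fact that the rotation $\rho$ sending $\Pi_1$ onto $\Pi_k$ corresponds on the categorical side to the autoequivalence $\tau^{-1}[1]$, whose $p$-th power $\tau^{-p}[p]$ is precisely the functor being quotiented out in $\C_{n,p}$. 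Consequently $\Phi$ maps diagonals sitting in $\Pi_k$ into $\F_k = (\tau^{-1}[1])^{k-1}(\F_1)$.

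Surjectivity is then immediate from Proposition~2.5(4): any indecomposable of $\C_{n,p}$ lies in some $\F_k$, hence is of the form $(\tau^{-1}[1])^{k-1} Y$ with $Y \in \F_1$, and applying the classical CCS bijection to $Y$ exhibits it as $\Phi(i,j,k)$ for a unique diagonal $(i,j)$ of $\Pi_1$.

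The main work will be injectivity. If $\Phi(i,j,k) \cong \Phi(i',j',k')$ in $\C_{n,p}$, then unfolding the orbit-category definition of morphisms yields $m \in \mathbb{Z}$ with
$$
X_{(i,j)} \cong (\tau^{-1}[1])^{\,k'-k+pm}\, X_{(i',j')} \quad \text{in } \D.
$$
The key point is that $\F_1$ is a strict fundamental domain for $\tau^{-1}[1]$ on $\ind\D$, so two of its objects can be related by a power of $\tau^{-1}[1]$ only when that power vanishes. Hence $k'-k+pm=0$, and the constraint $1\le k,k'\le p$ forces $k=k'$ and $m=0$; then the classical CCS bijection delivers $(i,j)=(i',j')$.

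The only genuinely delicate step is the fundamental-domain argument in the injectivity step, which is the categorical counterpart to the fact that the regions $\Pi_1,\ldots,\Pi_p$ partition $\Pi^p$ into $p$ disjoint copies. Beyond that, the proof is a bookkeeping exercise matching the rotation action $\rho$ on $\Pi^p$ with the autoequivalence $\tau^{-1}[1]$ on $\D$, and I do not anticipate any obstacle that is not already built into Proposition~2.5(4).
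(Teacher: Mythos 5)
Your argument is correct, and it is essentially the argument that underlies the cited result: the paper itself offers no proof of this lemma but refers to Lamberti, and in \cite{L1} the correspondence is built exactly from the data you use, namely the decomposition $\ind(\C_{n,p})=\F_1\cup\cdots\cup\F_p$ with $\F_k=(\tau^{-1}[1])^{k-1}(\F_1)$ matched against the decomposition of the diagonals of $\Pi^p$ into those of $\Pi_1,\dots,\Pi_p$, with the Caldero--Chapoton--Schiffler bijection handling each slice. The difference is one of scope: Lamberti actually proves more, namely an isomorphism of stable translation quivers between $\Gamma_{n,p}$ and the AR-quiver of $\C_{n,p}$, so that the bijection is also compatible with arrows, the translation and hence with the combinatorial formulas for $[1]$ and the Ext-criterion of Lemma \ref{lem2} used later; your fundamental-domain argument yields only the bijection on objects, which is all the statement asks for, and does so in a more elementary, self-contained way. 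The single step in your write-up that should be made explicit is the ``unfolding'' in the injectivity argument: an isomorphism $\pi_p X\cong \pi_p Y$ between images of indecomposables of $\D$ forces $X\cong(\tau^{-p}[p])^{m}Y$ in $\D$ for some $m$, because $\Hom_{\C_{n,p}}(X,Y)=\bigoplus_{i}\Hom_{\D}\bigl(X,(\tau^{-p}[p])^{i}Y\bigr)$, the degree-zero component of the relation $gf=1_X$ is a finite sum in the local ring $\End_{\D}(X)$, hence some summand is invertible and the corresponding component $X\to(\tau^{-p}[p])^{i}Y$ is an isomorphism of indecomposables. Once that is said, the strictness of the fundamental domain (each $\tau^{-1}[1]$-orbit in $\ind\D$ meets $\F_1$ exactly once, as in \cite{BMRRT}) together with $1\le k,k'\le p$ gives $k=k'$, $m=0$, and the classical bijection finishes the proof exactly as you claim.
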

In the following, we always use a diagonal $(i,j,k)$ to represent an indecomposable object $M$ in $\C_{n,p}$ without confusion, denote by $M=(i,j,k)$. This bijection induces a bijection between the subcategories of $\C_{n,p}$ and the sets of diagonals in $\Pi^p$. For a subcategory $\X$ of $\C_{n,p}$, we denote the corresponding set of diagonals in $\Pi^p$ by $\XX$. 

The correspondence induces the action of the shift functor $[1]$ in $\C_{n,p}$ on diagonals of $\Pi^p$, which is as follows:
\begin{itemize}
  \item If $i=1$, then $(i,j,k)[1]=(j-1,N,k)$.
  \item If $i\neq1$, then $(i,j,k)[1]=\rho(i-1,j-1,k)=(i-1,j-1,k+1)$.
\end{itemize}

Remember that the fundamental domain $\F$ is given by the isoclasses of indecomposable objects in $\C_n$, which corresponds to diagonals in $\Pi$. For two diagonals $X=(i,j,\ell)$ and $Y=(i^\prime,j^\prime,\ell)$ in $\Pi_{\ell}$, $X$ and $Y$ cross if $i<i^\prime<j<j^\prime$ or $i^\prime<i<j^\prime<j$. Recall the fact from \cite[Sec. 5]{CCS} that $\Ext_{\C_{n}}^{1}(X,Y)\neq 0$ if and only if the two diagonals $X$ and $Y$ cross. The following result is useful for classification of torsion pairs in $\C_{n,p}$. 
\begin{lemma}\cite[Lemma 5.3]{L1}\label{lem2}
Let $X=(i,j,\ell)$ and $Y=(i^\prime,j^\prime,\ell^\prime)$ be two diagonals in $\Pi^p$. We have
\[\Ext_{\C_{n,p}}^{1}(X,Y)\neq 0\Leftrightarrow\left\{
\begin{array}{cc}
\ell=\ell^\prime\text{\ and\ } 1\leq i^\prime<i<j^\prime<j\leq N,\\
\ell=\ell^\prime+1\text{\ and\ }1\leq i<i^\prime<j<j^\prime\leq N.
\end{array}
\right.
\]
Dually, we have 
\[\Ext_{\C_{n,p}}^{1}(Y,X)\neq 0\Leftrightarrow\left\{
\begin{array}{cc}
\ell=\ell^\prime\text{\ and\ } 1\leq i<i^\prime<j<j^\prime\leq N,\\
\ell^\prime=\ell+1\text{\ and\ }1\leq i^\prime<i<j^\prime<j\leq N.
\end{array}
\right.
\]
Otherwise $\Ext_{\C_{n,p}}^{1}(-,-)=0$.
\end{lemma}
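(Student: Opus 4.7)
The plan is to express $\Ext^{1}_{\C_{n,p}}(X,Y)=\Hom_{\C_{n,p}}(X,Y[1])$ as a distinguished direct summand of $\Ext^{1}_{\C_{n}}(\eta_{p}X,\eta_{p}Y)$ through the triangle projection $\eta_{p}\colon \C_{n,p}\to\C_{n}$, and then to read off when that summand is nonzero using the crossing criterion in $\C_n$ recalled from \cite[Sec.\ 5]{CCS}.

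First, expanding via the orbit-category formula,
$$\Hom_{\C_{n,p}}(X,Y[1])=\bigoplus_{s\in\mathbb Z}\Hom_{\D}\bigl(X,(\tau^{-p}[p])^{s}Y[1]\bigr),$$
and choosing $X,Y$ as representatives in the fundamental domain $\bigcup_{i=1}^{p}\F_{i}$, the fact that $\D^b(\mod KA_n)$ is hereditary forces at most one value of $s$ to contribute. So the task reduces to identifying that $s$ and the corresponding derived Hom.

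Next, I would use the explicit formula for $[1]$ on diagonals recalled just before the lemma to determine $Y[1]$ as a diagonal of $\Pi^{p}$. If $i'=1$ then $Y[1]=(j'-1,N,\ell')$, which sits in $\F_{\ell'}$; if $i'\neq 1$ then $Y[1]=(i'-1,j'-1,\ell'+1)$, which sits in $\F_{\ell'+1}$. A nonzero term in the $s$-sum requires that the region of $X=(i,j,\ell)$ and the region of the shifted $Y[1]$ agree as objects of $\D$, which forces $\ell=\ell'$ in the first subcase and $\ell=\ell'+1$ in the second. Once inside a single fundamental region $\F_{\ell}$, the remaining derived Hom equals the $\C_n$-Hom between the associated diagonals of $\Pi$, which is nonzero iff the two diagonals cross in $\Pi$, by \cite[Sec.\ 5]{CCS}.

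Unfolding the crossing criterion in each subcase then produces exactly the two cases of the statement: in the $i'=1$ subcase, the crossing of $X=(i,j)$ and $Y[1]=(j'-1,N)$ inside $\Pi_{\ell}$ translates to $1\le i'<i<j'<j\le N$, and in the $i'\neq 1$ subcase, the crossing of $X=(i,j)$ and $(i'-1,j'-1)$ inside $\Pi_{\ell}$ translates to $1\le i<i'<j<j'\le N$. The dual statement for $\Ext^{1}_{\C_{n,p}}(Y,X)$ then follows by interchanging the roles of $X$ and $Y$ (which swaps the two crossing types and the two region offsets), or equivalently from the Serre duality $D\Ext^{1}_{\C_{n,p}}(Y,X)\cong\Hom_{\C_{n,p}}(X,\tau Y)$ given by the $\frac{2p}{p}$-Calabi--Yau property.

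The hard part will be the combinatorial bookkeeping: verifying that in each subcase exactly one term of the infinite $s$-sum is nonzero and then translating the resulting $\Pi$-crossing condition back into the asymmetric inequalities of the lemma. In particular, the boundary subcase $i'=1$, where $Y[1]$ involves the vertex $N\equiv 1$ shared between $\Pi_{\ell'}$ and $\Pi_{\ell'+1}$, requires care, and one also needs to double-check that the two crossing types in $\Pi$ correspond correctly to the two possible region offsets $\ell-\ell'\in\{0,1\}$ so that no further cases appear and no crossing pair is counted twice.
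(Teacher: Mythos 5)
The paper itself gives no proof of this lemma (it is quoted from \cite[Lemma 5.3]{L1}), so your proposal must stand on its own, and as written it contains two genuine errors. First, the principle you use to locate the contributing summand of $\bigoplus_{s}\Hom_{\D}(X,(\tau^{-p}[p])^{s}Y[1])$ --- that a nonzero term forces $X$ and the shifted $Y[1]$ to lie in the same region --- is false: morphisms in $\D$, in particular extensions between modules, connect adjacent regions $\F_{\ell}$ and $\F_{\ell+1}$. Concretely, for $n=3$, $p=3$, $X=(3,6,1)$, $Y=(2,4,1)$, the first case of the lemma ($\ell=\ell'$ and $2<3<4<6$) gives $\Ext^{1}_{\C_{3,3}}(X,Y)\neq0$, yet $Y[1]=(1,3,2)$ lies in $\F_{2}$ while $X\in\F_{1}$, so your criterion would declare this Ext to be zero; your case analysis therefore misses the whole case $\ell=\ell'$ with $i'\neq1$. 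Second, the crossing criterion of \cite{CCS} recalled in Section 3 characterizes $\Ext^{1}_{\C_{n}}(X,Y)=\Hom_{\C_{n}}(X,Y[1])$ by the crossing of $X$ with $Y$, not $\Hom_{\C_{n}}(X,Z)$ by the crossing of $X$ with $Z$; applying it to the pair $(X,Y[1])$ is wrong. In the paper's own example, $X=(2,5,1)$ and $Y=(1,3,1)$ give $Y[1]=(2,6,1)$, which shares the vertex $2$ with $X$ and does not cross it, and $Y=(3,6,3)$ gives $Y[1]=X$, which certainly does not cross $X$, yet in both cases $\Ext^{1}_{\C_{3,3}}(X,Y)\neq0$. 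Relatedly, your closing ``translations'' (e.g.\ that crossing of $(i,j)$ with $(j'-1,N)$ is equivalent to $i'<i<j'<j$) are not actual equivalences: they shift the boundary cases $i=j'-1$ and $j=N$ and drop one branch of the crossing condition, which is what makes the final answer appear to match the statement.

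A correct argument along your intended lines writes $X=F^{\ell-1}X_{0}$, $Y=F^{\ell'-1}Y_{0}$ with $X_{0},Y_{0}\in\F_{1}$ and $F=\tau^{-1}[1]$, so that $\Ext^{1}_{\C_{n,p}}(X,Y)\cong\bigoplus_{s}\Hom_{\D}(X_{0},F^{ps+\ell'-\ell}Y_{0}[1])$. The computation underlying the $p=1$ case (see \cite{BMRRT}) shows $\Hom_{\D}(X_{0},F^{t}Y_{0}[1])=0$ for all $t\neq0,-1$, the term $t=0$ being $\Ext^{1}_{\D}(X_{0},Y_{0})$ and the term $t=-1$ being $\Hom_{\D}(X_{0},\tau Y_{0})\cong D\Ext^{1}_{\D}(Y_{0},X_{0})$. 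This is what pins down $\ell-\ell'\equiv 0$ or $1\pmod p$ and identifies which of the two summands of $\Ext^{1}_{\C_{n}}(\eta_{p}X,\eta_{p}Y)$ survives in $\C_{n,p}$; one then translates $\Ext^{1}_{\D}(X_{0},Y_{0})\neq0$ versus $\Ext^{1}_{\D}(Y_{0},X_{0})\neq0$ into the two orientations of a crossing of the diagonals $X_{0}$ and $Y_{0}$ (not of $X$ and $Y[1]$), which yields exactly the asymmetric inequalities of the statement. Your final Serre-duality remark is harmless, but note that since $\C_{n,p}$ is only $\frac{2p}{p}$-CY you may not invoke the $2$-CY symmetry $\Ext^{1}(X,Y)\cong D\Ext^{1}(Y,X)$; that asymmetry is precisely the content of the two displayed conditions.
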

\begin{example}
Let $n=3,p=3$. We draw the AR-quiver of $\C_{3,3}$ by Lemma \ref{lem1}, see Fig. \ref{03}. Let $X=(2,5,1)$. By Lemma \ref{lem2}, we get the following results. 
\begin{itemize}
  \item [(1)] The indecomposable objects $Y$ that satisfy $\Ext_{\C_{3,3}}^{1}(X,Y)\neq 0$ are exactly $(3,6,3),(4,6,3),(1,3,1),(1,4,1)$. 
  \item [(2)] The indecomposable objects $Z$ that satisfy $\Ext_{\C_{3,3}}^{1}(Z,X)\neq 0$ are exactly $(3,6,1),(4,6,1),(1,3,2),(1,4,2)$. 
\end{itemize}
\begin{center}
\begin{figure}
\begin{tikzpicture}[scale=0.8,
fl/.style={->,shorten <=6pt, shorten >=6pt,>=latex}]
\coordinate (13) at (0,0) ;
\coordinate (14) at (1,1) ;
\coordinate (15) at (2,2) ;
\coordinate (24) at (2,0) ;
\coordinate (25) at (3,1) ;
\coordinate (26) at (4,2) ;
\coordinate (35) at (4,0) ;
\coordinate (36) at (5,1) ;
\coordinate (37) at (6,2) ;
\coordinate (46) at (6,0) ;
\coordinate (47) at (7,1) ;
\coordinate (48) at (8,2) ;
\coordinate (57) at (8,0) ;
\coordinate (58) at (9,1) ;
\coordinate (59) at (10,2) ;
\coordinate (68) at (10,0) ;
\coordinate (69) at (11,1) ;
\coordinate (610) at (12,2) ;
\coordinate (79) at (12,0) ;
\coordinate (710) at (13,1) ;
\coordinate (711) at (14,2) ;
\coordinate (810) at (14,0) ;
\coordinate (811) at (15,1) ;
\coordinate (812) at (16,2) ;
\coordinate (911) at (16,0) ;
\coordinate (912) at (17,1) ;
\coordinate (913) at (18,2) ;
\coordinate (1012) at (18,0) ;
\coordinate (1013) at (19,1) ;
\coordinate (1014) at (20,2) ;
\coordinate (1113) at (20,0) ;
\coordinate (1114) at (21,1) ;
\coordinate (1115) at (22,2) ;

\draw[fl] (13) -- (14) ;
\draw[fl] (14) -- (15) ;
\draw[fl] (14) -- (24) ;
\draw[fl] (15) --(25) ;
\draw[fl] (24) --(25) ;
\draw[fl] (25) --(35) ;
\draw[fl] (25) --(26) ;
\draw[fl] (35) --(36) ;
\draw[fl] (36) --(37) ;
\draw[fl] (26) --(36) ;
\draw[fl] (46) --(47) ;
\draw[fl] (47) --(48) ;
\draw[fl] (37) --(47) ;
\draw[fl] (36) --(46) ;
\draw[fl] (57) --(58) ;
\draw[fl] (58) --(59) ;
\draw[fl] (48) --(58) ;
\draw[fl] (47) --(57) ;
\draw[fl] (58) --(68) ;
\draw[fl] (68) --(69) ;
\draw[fl] (69) --(610) ;
\draw[fl] (59) --(69) ;
\draw[fl] (79) --(710) ;
\draw[fl] (710) --(711) ;
\draw[fl] (610) --(710) ;
\draw[fl] (69) --(79) ;
\draw[fl] (810) --(811) ;
\draw[fl] (811) --(812) ;
\draw[fl] (711) --(811) ;
\draw[fl] (710) --(810) ;
\draw[fl] (911) --(912) ;
\draw[fl] (912) --(913) ;
\draw[fl] (812) --(912) ;
\draw[fl] (811) --(911) ;
\draw[fl] (1012) --(1013) ;
\draw[fl] (913) --(1013) ;
\draw[fl] (912) --(1012) ;
\draw[fl] (1013) --(1113) ;
\draw (13) node[scale=0.5] {(1,3,1)} ;
\draw (14) node[scale=0.5] {(1,4,1)} ;
\draw (15) node[scale=0.5] {(1,5,1)} ;
\draw (24) node[scale=0.5] {(2,4,1)} ;
\draw (25) node[scale=0.5] {(2,5,1)} ;
\draw (26) node[scale=0.5] {(2,6,1)} ;
\draw (35) node[scale=0.5] {(3,5,1)} ;
\draw (36) node[scale=0.5] {(3,6,1)} ;
\draw (37) node[scale=0.5] {(1,3,2)} ;
\draw (46) node[scale=0.5] {(4,6,1)} ;
\draw (47) node[scale=0.5] {(1,4,2)} ;
\draw (48) node[scale=0.5] {(2,4,2)} ;
\draw (57) node[scale=0.5] {(1,5,2)} ;
\draw (58) node[scale=0.5] {(2,5,2)} ;
\draw (59) node[scale=0.5] {(3,5,2)} ;
\draw (68) node[scale=0.5] {(2,6,2)} ;
\draw (69) node[scale=0.5] {(3,6,2)} ;
\draw (610) node[scale=0.5] {(4,6,2)} ;
\draw (79) node[scale=0.5] {(1,3,3)} ;
\draw (710) node[scale=0.5] {(1,4,3)} ;
\draw (711) node[scale=0.5] {(1,5,3)} ;
\draw (810) node[scale=0.5] {(2,4,3)} ;
\draw (811) node[scale=0.5] {(2,5,3)} ;
\draw (812) node[scale=0.5] {(2,6,3)} ;
\draw (911) node[scale=0.5] {(3,5,3)} ;
\draw (912) node[scale=0.5] {(3,6,3)} ;
\draw (913) node[scale=0.5] {(1,3,1)} ;
\draw (1012) node[scale=0.5] {(4,6,3)} ;
\draw (1013) node[scale=0.5] {(1,4,1)} ;
\draw (1113) node[scale=0.5] {(1,5,1)} ;
\draw[thick, dashed, blue] (-1,-0.5) -- (7,-0.5) -- (4,2.5) -- (1.5,2.5) --cycle ;
\draw[thick, dashed, blue] (8,-0.5) -- (10.5,-0.5) -- (13,2.5) -- (5,2.5) --cycle ;
\draw[thick, dashed, blue] (11,-0.5) -- (19.5,-0.5) -- (16.5,2.5) -- (14,2.5) --cycle ;
\end{tikzpicture}
\caption{The AR-quiver of $\C_{3,3}$. The fundamental domains $\F_1,\F_2$ and $\F_3$ are encircled by dashed lines}
\label{03}
\end{figure}
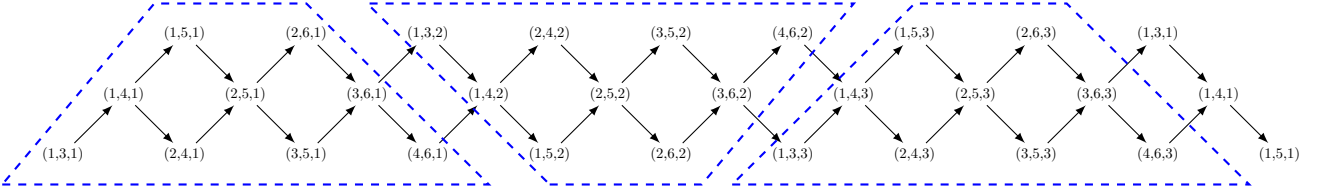
\end{center}
\end{example}
\section{Classification of torsion pairs in $\C_{n,p}$}
In this section, we define Ptolemy diagrams in $\Pi^p$ and give a complete classification of torsion pairs in $\C_{n,p}$. Firstly, we summarize some known facts about torsion pairs in a triangulated category based on \cite{BMRRT,IY,N}.
\subsection{Definition of torsion pairs}
\begin{definition}
Let $\X, \Y$ be subcategories of a triangulated category $\C$.
\begin{itemize}
  \item [(1)] The pair $(\X, \Y)$ is called a torsion pair if
  $$\Hom_{\C}(\X, \Y)=0\;\text{and}\;\C=\X\ast\Y. $$
  \item [(2)] The pair $(\X, \Y)$ is called a cotorsion pair if
  $$\Ext^{1}_{\C}(\X, \Y)=0\;\text{and}\;\C=\X\ast\Y[1]. $$
\end{itemize}
\end{definition}
\begin{remark}\label{remark3}
$(\X,\Y)$ is a cotorsion pair if and only if $(\X,\Y[1])$ is a torsion pair.
\end{remark}
\begin{lemma}\cite[Proposition 2.3]{IY},\cite[Proposition 2.3]{HJR1}\label{lemma0}
Let $\X$ be a subcategory of $\C_{n,p}$. Then the following statements are equivalent.
\begin{itemize}
  \item [(1)] $\X$ is closed under extensions.
  \item [(2)] $(\X,\X^\perp)$ is a torsion pair.
  \item [(3)] $\X={^\perp(\X^\perp)}$.
\end{itemize}
\end{lemma}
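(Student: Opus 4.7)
The plan is to verify the cycle (1) $\Rightarrow$ (2) $\Rightarrow$ (3) $\Rightarrow$ (1), together with the direct implication (2) $\Rightarrow$ (1); three of these are diagram chases and the substantive step is (1) $\Rightarrow$ (2). For (2) $\Rightarrow$ (1), given a triangle $X \to M \to X' \to X[1]$ with $X, X' \in \X$, I decompose $M$ via the torsion pair as $A \to M \to B \to A[1]$ with $A \in \X$ and $B \in \X^\perp$; since $\Hom(X, B) = 0$ the composite $X \to M \to B$ vanishes and $M \to B$ factors through $M \to X'$, whereupon $\Hom(X', B) = 0$ forces $M \to B = 0$, so $A \to M$ is split epi and $M$ is a direct summand of $A \in \X$. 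An identical torsion-decomposition argument applied to $M \in {^\perp(\X^\perp)}$ yields (2) $\Rightarrow$ (3). The implication (3) $\Rightarrow$ (1) is immediate by applying $\Hom(-, Y)$ to the given triangle for arbitrary $Y \in \X^\perp$: the long exact sequence pinches $\Hom(M, Y)$ between the zero groups $\Hom(X', Y)$ and $\Hom(X, Y)$, placing $M \in {^\perp(\X^\perp)} = \X$.

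The heart of the proof is (1) $\Rightarrow$ (2). The Proposition of Section 2.3 yields $\ind(\C_{n,p}) = \bigcup_{i=1}^{p} \F_i$, so $\C_{n,p}$ has only finitely many isoclasses of indecomposables; together with the Krull--Schmidt Hom-finite structure this guarantees that every subcategory $\X$ is functorially finite and that minimal right $\X$-approximations always exist. For each $M \in \C_{n,p}$, pick a minimal right $\X$-approximation $f \colon X \to M$ and complete it to a triangle $X \xrightarrow{f} M \xrightarrow{v} Y \xrightarrow{w} X[1]$. Once I show $Y \in \X^\perp$, this triangle exhibits $M \in \X \ast \X^\perp$ and hence produces the torsion pair.

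To prove $Y \in \X^\perp$, take any $\varphi \colon X' \to Y$ with $X' \in \X$ and form the triangle $X \xrightarrow{a} E \xrightarrow{b} X' \xrightarrow{w\varphi} X[1]$; extension closure places $E \in \X$. The morphism-of-triangles axiom supplies $g \colon E \to M$ with vertical data $\mathrm{id}_X, g, \varphi, \mathrm{id}_{X[1]}$, yielding $g a = f$ and $v g = \varphi b$. Since $E \in \X$, the approximation property writes $g = f h$ for some $h \colon E \to X$, so $f(ha) = ga = f$; minimality of $f$ forces $ha$ to be an automorphism of $X$, making $a$ a split mono and splitting the triangle $X \to E \to X' \to X[1]$, which in turn forces $w \varphi = 0$. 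Consequently $\varphi$ lifts through $v$ to some $\tilde\varphi \colon X' \to M$, and factoring $\tilde\varphi = f h'$ through the approximation gives $\varphi = v \tilde\varphi = v f h' = 0$. The main obstacle is precisely this Wakamatsu-style step: securing $Y \in \X^\perp$ rests on the triple interplay between extension closure of $\X$, minimality of $f$, and the Krull--Schmidt Hom-finite structure of $\C_{n,p}$ that makes such minimal approximations exist.
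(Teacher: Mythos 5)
Your proposal is correct: all four implications are argued soundly, and the key step (1)$\Rightarrow$(2) correctly uses the finiteness of $\ind(\C_{n,p})$ to obtain minimal right $\X$-approximations and then runs the standard Wakamatsu-style argument to place the cone in $\X^\perp$. The paper itself gives no proof of this lemma but cites \cite[Proposition 2.3]{IY} and \cite[Proposition 2.3]{HJR1}, and your argument is essentially the same approximation-based proof found in those references.
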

\subsection{Ptolemy diagrams in $\Pi^p$}
Before we give the definition of Ptolemy diagrams, we define when two diagonals of $\Pi^p$ cross.
\begin{definition}\label{cross}
Two diagonals $X=(i,j,\ell)$ and $Y=(i^\prime,j^\prime,\ell^\prime)$ of $\Pi^p$ are said to be cross if one of the following conditions is satisfied:
\begin{itemize}
  \item [(1)] $\ell=\ell^\prime$ and $1\leq i<i^\prime<j<j^\prime\leq N$ or $1\leq i^\prime<i<j^\prime<j\leq N$.
  \item [(2)] $\ell^\prime=\ell+1$ and  $1\leq i^\prime<i<j^\prime<j\leq N$.
  \item [(3)] $\ell=\ell^\prime+1$ and  $1\leq i<i^\prime<j<j^\prime\leq N$.
\end{itemize}
\begin{remark}
By Lemma \ref{lem2}, $X=(i,j,\ell)$ crosses $Y=(i^\prime,j^\prime,\ell^\prime)$ if and only if $\Ext^1_{\C_{n,p}}(X,Y)\neq 0$ or $\Ext^1_{\C_{n,p}}(Y,X)\neq 0$. When $p=1$, the conditions in Definition \ref{cross} reduce to (1) only, which is compatible with the definition of cross in $\Pi$.
\end{remark}
\end{definition}
\begin{definition}\label{c2}
  Let $\mathfrak{U}$  be a set of diagonals in $\Pi^p$. Then $\mathfrak{U}$ is called a
  Ptolemy diagram if it satisfies the following
  conditions.  Let $X=(i,j,\ell)$ and $Y=(i^\prime,j^\prime,\ell^\prime)$
  be crossing diagonals in $\mathfrak{U}$ (in the sense of
  Definition \ref{cross}).
  \begin{enumerate}
  \item[{(Pt1)}] If $\ell=\ell^\prime$ with $1\leq i<i^\prime<j<j^\prime\leq N$ or $1\leq i^\prime<i<j^\prime<j\leq N$, i.e. $X$ and $Y$  cross in $\Pi_\ell$, we have the following two cases:
      \begin{itemize}
        \item [(i)] If $1\leq i<i^\prime<j<j^\prime\leq N$, then those of $(i^\prime,j,\ell)$, $(i,j^\prime,\ell)$ which are diagonals of $\Pi^p$ are also in $\mathfrak{U}$.
        \item [(ii)] If $1\leq i^\prime<i<j^\prime<j\leq N$, then those of $(i,j^\prime,\ell)$, $(i^\prime,j,\ell)$ which are diagonals of $\Pi^p$ are also in $\mathfrak{U}$.
      \end{itemize}
  \item[{(Pt2)}] If $\ell^\prime=\ell+1$ with $1\leq i^\prime<i<j^\prime<j\leq N$,
  then those of $(i^\prime,i,\ell^\prime)$, $(j^\prime,j,\ell)$ which are diagonals of $\Pi^p$ are also in $\mathfrak{U}$.
  \item[{(Pt3)}] If $\ell=\ell^\prime+1$ with $1\leq i<i^\prime<j<j^\prime\leq N$,
  then those of $(i,i^\prime,\ell)$, $(j,j^\prime,\ell^\prime)$ which are diagonals of $\Pi^p$ are also in $\mathfrak{U}$.
  \end{enumerate}
\end{definition}
These conditions are illustrated in Fig. \ref{04}.
\begin{figure}
  \centering
  \begin{enumerate}
  \item[{(Pt1)}]
  The first Ptolemy condition:
$$
\begin{tikzpicture}[scale=3]
\begin{scope}
    \foreach \x in {-20,0,20,40,60,80,100,120,140,160,180,200,220,240,260,280,300,320}{
        \draw (\x:1 cm) -- (\x + 20: 1cm) -- cycle;
        \draw [thick](20:1cm)--(-40:1cm);
        \draw [thick](0:1cm)--(60:1cm);
        \draw [thick][dashed](60:1cm)--(-40:1cm);
        \draw [thick][dashed](20:1cm)..controls(10:0.97cm)..(0:1cm);
        \node at (-40:1.1cm) {$j^\prime$};
        \node at (20:1.1cm) {$i^\prime$};
        \node at (0:1.1cm) {$j$};
        \node at (60:1.1cm) {$i$};
 \draw (320:1cm)..controls(180:-0.1cm)..(80:1cm); 
  \draw (80:1cm)..controls(120:0.2cm)..(200:1cm);  
   \draw (200:1cm)..controls(240:0.2cm)..(320:1cm); 
   \node at (180:-0.4cm){\huge{$\Pi_\ell$}};       
        }
\end{scope}
\end{tikzpicture}
\begin{tikzpicture}[scale=3][auto]
\begin{scope}
    \foreach \x in {-20,0,20,40,60,80,100,120,140,160,180,200,220,240,260,280,300,320}{
        \draw (\x:1 cm) -- (\x + 20: 1cm) -- cycle;
        \draw [thick](20:1cm)--(-40:1cm);
        \draw [thick](0:1cm)--(60:1cm);
        \draw [thick][dashed](60:1cm)--(-40:1cm);
        \draw [thick][dashed](20:1cm)..controls(10:0.97cm)..(0:1cm);
        \node at (-40:1.1cm) {$j$};
        \node at (20:1.1cm) {$i$};
        \node at (0:1.1cm) {$j^\prime$};
        \node at (60:1.1cm) {$i^\prime$};
 \draw (320:1cm)..controls(180:-0.1cm)..(80:1cm); 
  \draw (80:1cm)..controls(120:0.2cm)..(200:1cm);  
   \draw (200:1cm)..controls(240:0.2cm)..(320:1cm); 
   \node at (180:-0.4cm){\huge{$\Pi_\ell$}};       
        }
\end{scope}
\end{tikzpicture}
$$

  \item[{(Pt2)}]
The second Ptolemy condition:
  $$
\begin{tikzpicture}[scale=3]
\begin{scope}
    \foreach \x in {-20,0,20,40,60,80,100,120,140,160,180,200,220,240,260,280,300,320}{
        \draw (\x:1 cm) -- (\x + 20: 1cm) -- cycle;
        \draw [thick](20:1cm)--(-40:1cm);
        \draw [thick](0:1cm)--(60:1cm);
        \draw [thick](260:1cm)--(200:1cm);
        \draw [thick](240:1cm)--(300:1cm);
        \draw [thick][dashed](260:1cm)--(300:1cm);
        \draw [thick][dashed](0:1cm)--(-40:1cm);
        \node at (-40:1.1cm) {$j$};
        \node at (20:1.1cm) {$i$};
        \node at (0:1.1cm) {$j^\prime$};
        \node at (60:1.1cm) {$i^\prime$};
          \node at (200:1.1cm) {$j$};
        \node at (260:1.1cm) {$i$};
        \node at (240:1.1cm) {$j^\prime$};
        \node at (300:1.1cm) {$i^\prime$};
 \draw (320:1cm)..controls(180:-0.1cm)..(80:1cm); 
  \draw (80:1cm)..controls(120:0.2cm)..(200:1cm);  
   \draw (200:1cm)..controls(240:0.2cm)..(320:1cm); 
   \node at (180:-0.4cm){\huge{$\Pi_\ell$}};    
   \node at (100:-0.6cm){\huge{$\Pi_{\ell+1}$}};    
        }
\end{scope}
\end{tikzpicture}
$$
\item[{(Pt3)}]
The third Ptolemy condition:
\[
\begin{tikzpicture}[scale=3]
\begin{scope}
    \foreach \x in {-20,0,20,40,60,80,100,120,140,160,180,200,220,240,260,280,300,320}{
        \draw (\x:1 cm) -- (\x + 20: 1cm) -- cycle;
        \draw [thick](20:1cm)--(-40:1cm);
        \draw [thick](0:1cm)--(60:1cm);
        \draw [thick](260:1cm)--(200:1cm);
        \draw [thick](240:1cm)--(300:1cm);
        \draw [thick][dashed](-40:1cm)--(0:1cm);
        \draw [thick][dashed](260:1cm)--(300:1cm);
        \node at (-40:1.1cm) {$j^\prime$};
        \node at (20:1.1cm) {$i^\prime$};
        \node at (0:1.1cm) {$j$};
        \node at (60:1.1cm) {$i$};
          \node at (200:1.1cm) {$j^\prime$};
        \node at (260:1.1cm) {$i^\prime$};
        \node at (240:1.1cm) {$j$};
        \node at (300:1.1cm) {$i$};
 \draw (320:1cm)..controls(180:-0.1cm)..(80:1cm); 
  \draw (80:1cm)..controls(120:0.2cm)..(200:1cm);  
   \draw (200:1cm)..controls(240:0.2cm)..(320:1cm); 
   \node at (180:-0.4cm){\huge{$\Pi_{\ell^\prime}$}};    
   \node at (100:-0.6cm){\huge{$\Pi_{\ell^\prime+1}$}};     
        }
\end{scope}
\end{tikzpicture}
\]
\end{enumerate}
\caption{The Ptolemy conditions}
\label{04}
\end{figure}
\begin{example}
Let $n=3,p=3$. Suppose $\X$ is a
  Ptolemy diagram with $X=(2,5,1)\in\X$.  
\begin{itemize}
  \item [(1)] If the indecomposable object $Y=(4,6,3)$ belongs to $\X$, then $X$ crosses $Y$ and thus $(2,4,1)$ belongs to $\X$. 
  \item [(2)] If the indecomposable object $Z=(1,3,2)$ belongs to $\X$, then $X$ crosses $Z$ and thus $(3,5,1)$ belongs to $\X$. 
\end{itemize}
\end{example}
\subsection{Classification of torsion pairs in $\C_{n,p}$}
In this subsection, we give a complete classification of torsion pairs in $\C_{n,p}$ via Ptolemy diagrams in $\Pi^p$.
The main result of this paper is the following classification of torsion pairs in $\C_{n,p}$ via Ptolemy diagrams in $\Pi^p$.
\begin{theorem}\label{mainresult}
Let $\X$ be a subcategory of $\C_{n,p}$, and $\XX$ be the corresponding set of diagonals in $\Pi^p$. Then the following statements are equivalent.
\begin{itemize}
  \item [(1)] $\X$ is closed under extensions.
  \item [(2)] $(\X,\X^\perp)$ is a torsion pair.
  \item [(3)] $\X={^\perp(\X^\perp)}$.
  \item [(4)] $\XX$ is a Ptolemy diagram.
\end{itemize}
\end{theorem}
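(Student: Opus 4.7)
The equivalences $(1)\Leftrightarrow(2)\Leftrightarrow(3)$ are already recorded in Lemma \ref{lemma0}. Thus the substantive content of the theorem is the equivalence $(1)\Leftrightarrow(4)$, and both directions are driven by a single explicit computation of middle terms of extensions, which I will set up as a preliminary lemma.

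\textbf{Key lemma (structure of the essentially unique non-split extension).} For two diagonals $X=(i,j,\ell)$ and $Y=(i',j',\ell')$ of $\Pi^p$ that cross in the sense of Definition \ref{cross}, I would show that $\Ext_{\C_{n,p}}^{1}(Y,X)$ (when nonzero) is one-dimensional and that the middle term of a non-split triangle $X\to M\to Y\to X[1]$ is isomorphic to the direct sum of precisely the ``Ptolemy partners'' listed in Definition \ref{c2}, with boundary segments interpreted as the zero object. Concretely, in case (Pt1) the middle term is $(i',j,\ell)\oplus(i,j',\ell)$; in case (Pt2) it is $(i',i,\ell')\oplus(j',j,\ell)$; and in case (Pt3) it is $(i,i',\ell)\oplus(j,j',\ell')$. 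For $p=1$ this is the classical exchange triangle computation of Caldero–Chapoton–Schiffler, and I would import that case directly. For $p>1$, I plan to lift to $\D^b(\mod KA_n)$ via the projection $\pi_p\colon\D\to\C_{n,p}$ and expand
\[
\Hom_{\C_{n,p}}(Y,X[1])\;=\;\bigoplus_{k\in\mathbb{Z}}\Hom_{\D}\bigl(Y,(\tau^{-p}[p])^k X[1]\bigr),
\]
observing that among the three cross types in Lemma \ref{lem2} only the summands $k=0$ and $k=\pm 1$ can contribute, which makes each of (Pt1), (Pt2), (Pt3) a single extension in $\D$ whose middle term is known from the $\C_n$ computation applied to a suitable shift; then $\pi_p$ sends that middle term to the claimed direct sum in $\C_{n,p}$.

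\textbf{Direction $(1)\Rightarrow(4)$.} Suppose $\X$ is extension-closed and let $X,Y\in\X$ be crossing diagonals. By Lemma \ref{lem2} one of $\Ext^1_{\C_{n,p}}(X,Y)$, $\Ext^1_{\C_{n,p}}(Y,X)$ is nonzero, so after possibly swapping $X$ and $Y$ there is a non-split triangle $X\to M\to Y\to X[1]$ with $M\in\X$ by extension-closedness. The key lemma then identifies the indecomposable summands of $M$ as exactly the Ptolemy partners required by whichever of (Pt1), (Pt2), (Pt3) applies. Hence those partners lie in $\X$ and $\XX$ is a Ptolemy diagram.

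\textbf{Direction $(4)\Rightarrow(1)$.} Assume $\XX$ is a Ptolemy diagram and take any triangle $X\to M\to Y\to X[1]$ with $X,Y\in\X$. If the connecting morphism is zero, the triangle splits, $M\cong X\oplus Y\in\X$, and we are done. Otherwise $\Ext^1_{\C_{n,p}}(Y,X)\neq 0$, so by Lemma \ref{lem2} $X$ and $Y$ cross; the key lemma then writes $M$ as a direct sum of Ptolemy partners of the pair $(X,Y)$, which all lie in $\X$ by one of (Pt1), (Pt2), (Pt3). Hence $M\in\X$ and $\X$ is extension-closed.

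The main obstacle is the key lemma, and within it the two mixed cases (Pt2) and (Pt3), where $X$ and $Y$ lie in adjacent fundamental domains. There the bookkeeping of the vertex identification $N\equiv 1$ between $\Pi_\ell$ and $\Pi_{\ell+1}$, together with the explicit description of $[1]$ on diagonals (which jumps between regions precisely when $i=1$), must be tracked carefully so that one selects the correct summand in the direct-sum expansion of $\Hom_{\C_{n,p}}$ and verifies that $\pi_p$ of the resulting middle term in $\D$ is the pair of diagonals named in Definition \ref{c2}. Once that verification is in place, both implications of $(1)\Leftrightarrow(4)$ follow immediately by the argument above.
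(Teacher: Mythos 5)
Your reduction of the theorem to the equivalence $(1)\Leftrightarrow(4)$ via Lemma \ref{lemma0} is fine, and your direction $(1)\Rightarrow(4)$ is a reasonable plan (modulo actually proving the ``key lemma'': the one-dimensionality of $\Ext^1$ between crossing indecomposables of $\C_{n,p}$ and the identification of the middle terms in the mixed cases (Pt2), (Pt3) are asserted, not proved, and the paper never needs this computation). The genuine gap is in $(4)\Rightarrow(1)$. Extension-closedness quantifies over \emph{all} objects of $\X$, which are arbitrary finite direct sums of indecomposables, and over \emph{all} connecting morphisms $\delta\colon Y\to X[1]$. Your argument invokes Lemma \ref{lem2} to say ``$X$ and $Y$ cross'' and then applies the key lemma about the essentially unique non-split extension; both steps only make sense when $X$ and $Y$ are single diagonals. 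For decomposable $X=\bigoplus_i X_i$, $Y=\bigoplus_j Y_j$ the connecting map is a matrix $(\delta_{ij})$, and the middle term of such a triangle is in general \emph{not} the direct sum of the middle terms of the entrywise extensions (new indecomposable summands can appear that are not summands of any ``elementary'' middle term), so closure under extensions with indecomposable end terms does not formally imply closure under all extensions. An inductive reduction via the octahedral axiom does not close up easily either: reducing the third term to an indecomposable produces extensions with an arbitrary object of $\X$ in the first slot, and vice versa. This missing reduction is essentially the hard part of the implication, so as written $(4)\Rightarrow(1)$ is not established.

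This is precisely the point the paper's route is designed to avoid: it proves the purely combinatorial statement that $\mathfrak{U}$ is a Ptolemy diagram if and only if $\mathfrak{U}={^\bot(\mathfrak{U}^\bot)}$ (Proposition \ref{prop1}, via Lemmas \ref{lemma1}--\ref{lemma3} and Corollaries \ref{coro1}, \ref{coro2}), translates the combinatorial perpendiculars into $\X^\perp[-1]$ and ${^\perp\X}[1]$ using Lemma \ref{lem2}, and then gets extension closure for arbitrary objects from the Iyama--Yoshino-type equivalence in Lemma \ref{lemma0}, i.e.\ from $(3)\Rightarrow(1)$, with no middle-term computation at all. To repair your proof you should either prove $(4)\Rightarrow(3)$ combinatorially along these lines, or supply a genuine argument (not just the statement) that in $\C_{n,p}$ closure under extensions with indecomposable end terms implies extension-closedness; the latter is an extra nontrivial step, not an immediate consequence of your key lemma.
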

In order to prove the main theorem, we need some preparation.

Let $\mathfrak{U}$ be a set of diagonals in $\Pi^p$. We define $\mathfrak{U}^\bot$ to be the set consisting of two kinds of diagonals:
\begin{itemize}
  \item [(1)] $u\in\Pi^p$ such that $u$ does not cross any diagonals in $\mathfrak{U}$, or
  \item [(2)] $u=(i^\prime,j^\prime,\ell^\prime)\in\Pi^p$ such that any diagonal $v=(i,j,\ell)\in\mathfrak{U}$ crossing $u$ satisfies one of the followings:
     \begin{itemize}
  \item [(i)] $\ell=\ell^\prime$ and $1\leq i<i^\prime<j<j^\prime\leq N$.
  \item [(ii)] $\ell=\ell^\prime-1$ and $1\leq i^\prime<i<j^\prime<j\leq N$.
  \end{itemize} 
\end{itemize}
Note that when $p=1$, condition (ii) vanishes. Similarly, we define $^\bot\mathfrak{U}$ to be the set consisting of two kinds of diagonals:
\begin{itemize}
  \item [(1)] $u\in\Pi^p$ such that $u$ does not cross any diagonals in $\mathfrak{U}$, or
  \item [(2)] $u=(i^\prime,j^\prime,\ell^\prime)\in\Pi^p$ such that any diagonal $v=(i,j,\ell)\in\mathfrak{U}$ crossing $u$ satisfies one of the followings:
     \begin{itemize}
  \item [(i)] $\ell=\ell^\prime$ and $1\leq i^\prime<i<j^\prime<j\leq N$.
  \item [(ii)] $\ell=\ell^\prime+1$ and $1\leq i<i^\prime<j<j^\prime\leq N$.
  \end{itemize} 
\end{itemize}
\begin{lemma}\label{lemma1}
Let $\mathfrak{U}$ be a set of diagonals in $\Pi^p$. Then $\mathfrak{U}^\bot$ and $^\bot\mathfrak{U}$ are Ptolemy diagrams.
\end{lemma}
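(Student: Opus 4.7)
The plan is to prove that $\mathfrak{U}^\bot$ satisfies each of (Pt1), (Pt2), (Pt3) by contradiction; the argument for $^\bot\mathfrak{U}$ is entirely dual and I would finish with one line to that effect. Fix crossing diagonals $X=(i,j,\ell)$ and $Y=(i',j',\ell')$ in $\mathfrak{U}^\bot$, let $Z$ be any diagonal produced by the relevant Ptolemy condition, and suppose $Z\notin\mathfrak{U}^\bot$. Then by the definition of $\mathfrak{U}^\bot$ there is $V=(a,b,m)\in\mathfrak{U}$ that crosses $Z=(i'',j'',\ell'')$ in one of the two \emph{forbidden} patterns not listed in the definition: either (F1) $m=\ell''$ with $i''<a<j''<b$, or (F2) $m=\ell''+1$ with $a<i''<b<j''$. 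The strategy is to show that in every case (F1) or (F2) forces $V$ to cross $X$ or $Y$ in one of these same two forbidden patterns, contradicting $X,Y\in\mathfrak{U}^\bot$.

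The case (Pt1) is handled first since it lives inside a single region and the argument reduces to one short chain of strict inequalities per sub-case. For instance, in (Pt1)(i) with $i<i'<j<j'$ and $Z=(i',j,\ell)$: a forbidden (F1)-crossing $i'<a<j<b$ of $V=(a,b,\ell)$ with $Z$ upgrades to an (F1)-crossing $i<a<j<b$ of $V$ with $X$ via $i<i'$; a forbidden (F2)-crossing $a<i'<b<j$ of $V=(a,b,\ell+1)$ with $Z$ upgrades to an (F2)-crossing $a<i'<b<j'$ of $V$ with $Y$ via $j<j'$. The other resolving diagonal $(i,j',\ell)$ and the case (Pt1)(ii) are symmetric by interchanging the roles of $X$ and $Y$.

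For (Pt2) (and dually (Pt3)) the region indices of $X$ and $Y$ differ by one, so the case analysis must compare $m$ with both $\ell$ and $\ell\pm 1$. A direct one-step upgrade often suffices, e.g., in (Pt2) with $Z=(i',i,\ell+1)$, a forbidden (F2)-crossing $a<i'<b<i$ of $V=(a,b,\ell+2)$ gives $a<i'<b<j'$ via $b<i<j'$, so $V$ (F2)-crosses $Y$. In other sub-cases one must split on the position of an endpoint of $V$ relative to a vertex of $X$ or $Y$: in the same (Pt2) with an (F1)-forbidden $V=(a,b,\ell+1)$ satisfying $i'<a<i<b$, splitting on whether $b\le j'$ or $b>j'$ places $V$ into an (F2)-crossing $a<i<b<j$ with $X$ or an (F1)-crossing $i'<a<j'<b$ with $Y$. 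Every split terminates in one of the two forbidden patterns.

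The substantive work is purely bookkeeping: three Ptolemy conditions, up to two resolving diagonals $Z$ each, two forbidden types for $V$, and occasionally a further endpoint split, amount to roughly a dozen atomic verifications of identical shape. The main obstacle is organising this case tree so that the crucial chain of inequalities in each case is immediately verifiable; I would accompany each case by a small picture in the style of Fig.~\ref{04} tracking the position of $V$ relative to $X$, $Y$, and $Z$. The statement for $^\bot\mathfrak{U}$ then follows by interchanging the two allowed crossing configurations in its definition and running the mirror-image contradiction.
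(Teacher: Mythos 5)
Your proposal is correct and follows essentially the same route as the paper: identify the two crossing patterns forbidden for membership in $\mathfrak{U}^\bot$, assume the resolving diagonal $Z$ admits such a crossing with some $V\in\mathfrak{U}$, and upgrade that crossing to a forbidden crossing of $V$ with $X$ or $Y$ (with an endpoint split in the mixed-region case (Pt2)/(Pt3)), the remaining cases and $^\bot\mathfrak{U}$ being symmetric. The only difference is cosmetic: in (Pt2) you split on $b\le j'$ versus $b>j'$ while the paper splits on $t<j$ versus $t\ge j$, and both splits work.
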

\begin{proof}
We need to show $\mathfrak{U}^\bot$ is a Ptolemy diagram, $^\bot\mathfrak{U}$ is a Ptolemy diagram can be proved similarly. 
\begin{itemize}
  \item [(1)] First, we need to show $\mathfrak{U}^\bot$ satisfies condition Pt1. Suppose $X=(i,j,\ell)$ and $Y=(i^\prime,j^\prime,\ell)$ are crossing diagonals in $\mathfrak{U}^\bot$ with $i<i^\prime<j<j^\prime$, and $(i^\prime,j,\ell)$ is a diagonal, we claim $(i^\prime,j,\ell)\in\mathfrak{U}^\bot$. Suppose, on the contrary, $(i^\prime,j,\ell)\notin\mathfrak{U}^\bot$, then there exists a diagonal $w\in\mathfrak{U}$ such that $w=(s,t,k)$ crosses  $(i^\prime,j,\ell)$, We have the following two cases:
      \begin{itemize}
        \item [(i)] $k=\ell$ and $i^\prime<s<j<t$. Then $w=(s,t,k)$ crosses $X=(i,j,\ell)$ with $i<s<j<t$. This is a contradiction with the fact that $w\in\mathfrak{U}$ and $X\in\mathfrak{U}^\bot$.  
        \item [(ii)] $k=\ell+1$ and $s<i^\prime<t<j$. Then $w=(s,t,k)$ crosses $Y=(i^\prime,j^\prime,\ell)$ with $s<i^\prime<t<j^\prime$. This is a contradiction with the fact that $w\in\mathfrak{U}$ and $Y\in\mathfrak{U}^\bot$. 
      \end{itemize}
      Thus we proved the claim. If $(i,j^\prime,\ell)$ is a diagonal, then $(i,j^\prime,\ell)\in\mathfrak{U}^\bot$ can be proved similarly. Dually, suppose $X=(i,j,\ell)$ and $Y=(i^\prime,j^\prime,\ell)$ are crossing diagonals in $\mathfrak{U}^\bot$ with $i^\prime<i<j^\prime<j$, similarly we can prove $(i,j^\prime,\ell)$ and $(i^\prime,j,\ell)$ are in $\mathfrak{U}^\bot$ if they are diagonals.
  \item [(2)] We need to show $\mathfrak{U}^\bot$ satisfies condition Pt2. Suppose $X=(i,j,\ell)$ and $Y=(i^\prime,j^\prime,\ell+1)$ are crossing diagonals in $\mathfrak{U}^\bot$ with $i^\prime<i<j^\prime<j$, and $(i^\prime,i,\ell+1)$ is a diagonal, we claim $(i^\prime,i,\ell+1)\in\mathfrak{U}^\bot$. Suppose, on the contrary, $(i^\prime,i,\ell+1)\notin\mathfrak{U}^\bot$, then there exists a diagonal $w\in\mathfrak{U}$ such that $w=(s,t,k)$ crossing  $(i^\prime,i,\ell+1)$, We have the following two cases:
      \begin{itemize}
        \item [(i)] $k=\ell+1$ and $i^\prime<s<i<t$. If $t<j$, then $w=(s,t,\ell+1)$ crosses $X=(i,j,\ell)$ with $s<i<t<j$, this is a contradiction with the fact that $w\in\mathfrak{U}$ and $X\in\mathfrak{U}^\bot$. If $t\geq j$, then $w=(s,t,\ell+1)$ crosses $Y=(i^\prime,j^\prime,\ell+1)$ with $i^\prime<s<j^\prime<t$, this is a contradiction with the fact that $w\in\mathfrak{U}$ and $Y\in\mathfrak{U}^\bot$.
        \item [(ii)] $k=\ell+2$ and $s<i^\prime<t<i$. Then $w=(s,t,k)$ crosses $Y=(i^\prime,j^\prime,\ell+1)$ with $s<i^\prime<t<j^\prime$. This is a contradiction with the fact that $w\in\mathfrak{U}$ and $Y\in\mathfrak{U}^\bot$. 
      \end{itemize}
      Thus we proved the claim. If $(j^\prime,j,\ell)$ is a diagonal, then $(j^\prime,j,\ell)\in\mathfrak{U}^\bot$ can be proved similarly. 
  \item [(3)] The case $\mathfrak{U}^\bot$ satisfies condition Pt3 can be proved similarly as (2).
\end{itemize}
\end{proof}

\begin{lemma}\label{lemma2}
Suppose $\mathfrak{U}$ is a set of diagonals in $\Pi^p$ and $(i,j,\ell)$ is a diagonal in $^\bot(\mathfrak{U}^\bot)$ with $1\leq i<j\leq N$. Then the following statements are hold: 
\begin{itemize}
  \item [(1)] If $i\neq 1$, then $(i-1,i+1,\ell)\notin\mathfrak{U}^\bot$.
  \item [(2)] If $j\neq N$, then $(j-1,j+1,\ell-1)\notin\mathfrak{U}^\bot$.
\end{itemize}
\end{lemma}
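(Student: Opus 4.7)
The plan is to handle both parts by contradiction, starting from the hypothesis $u := (i,j,\ell) \in {}^{\bot}(\mathfrak{U}^{\bot})$. In each case I exhibit a candidate $v \in \mathfrak{U}^{\bot}$ that crosses $u$ in a configuration matching neither pattern (i) nor pattern (ii) in the definition of ${}^{\bot}$ applied to $\mathfrak{U}^{\bot}$, directly contradicting $u \in {}^{\bot}(\mathfrak{U}^{\bot})$. The only work is lining up the many indexed patterns across Definition \ref{cross} and the definition of ${}^{\bot}\mathfrak{U}$; once that is done the contradictions are immediate.

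For (1), suppose for contradiction that $v := (i-1,i+1,\ell) \in \mathfrak{U}^{\bot}$. Since $(i,j,\ell)$ is a diagonal we have $j \geq i+2$, and $i \neq 1$ gives $i \geq 2$, so
\[
1 \leq i-1 < i < i+1 < j \leq N.
\]
By Definition \ref{cross}(1), $v$ crosses $u$ in $\Pi_{\ell}$. The assumption $u \in {}^{\bot}(\mathfrak{U}^{\bot})$ then forces this crossing to satisfy either (i) $\ell_v = \ell$ with $i < i_v < j < j_v$, or (ii) $\ell_v = \ell + 1$ with $i_v < i < j_v < j$. But (i) would require $i < i-1$, and (ii) would require $\ell = \ell+1$; both are absurd.

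For (2), suppose $v := (j-1,j+1,\ell-1) \in \mathfrak{U}^{\bot}$. From $j \leq N-1$ and $j \geq i+2$ we have $1 \leq i < j-1 < j < j+1 \leq N$, and since $\ell = (\ell-1)+1$, Definition \ref{cross}(3) shows that $v$ crosses $u$. Both patterns (i) and (ii) in the definition of ${}^{\bot}$ demand $\ell_v \in \{\ell,\ell+1\}$, whereas here $\ell_v = \ell-1$. Contradiction. The closest thing to a genuine obstacle is the degenerate situation where $(i-1,i+1,\ell)$ or $(j-1,j+1,\ell-1)$ fails to be a diagonal (which only happens when $N = 3$, whence $\Pi$ has no diagonals at all); in that case the claim is vacuous, since a non-diagonal cannot lie in $\mathfrak{U}^{\bot}$.
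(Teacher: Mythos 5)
Your argument is the same as the paper's: assume the short diagonal lies in $\mathfrak{U}^{\bot}$, observe that it crosses $u=(i,j,\ell)$, and check that the crossing fits neither of the two patterns allowed by the definition of ${}^{\bot}(-)$, contradicting $u\in{}^{\bot}(\mathfrak{U}^{\bot})$. The transcription of the allowed patterns and the verification of the crossings (including $j\ge i+2$, $i\ge 2$, $j\le N-1$) are all correct, and the remark about the degenerate $N=3$ situation is harmless since $n\ge 1$ forces $N\ge 4$.

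One point needs repair, though: the region index $k$ is taken modulo $p$ (Notation 3.3), so arithmetic like ``$\ell-1\notin\{\ell,\ell+1\}$'' is not automatic. In part (2) your dismissal of pattern (ii) reads ``both patterns demand $\ell_v\in\{\ell,\ell+1\}$, whereas here $\ell_v=\ell-1$''; for $p=2$ one has $\ell-1\equiv\ell+1\pmod{2}$, so this sentence is false as stated. The correct way to kill pattern (ii) there is by its index inequalities, which you have already set up: (ii) would require $i_v<i$, i.e. $j-1<i$, contradicting $i<j-1$ (from $j\ge i+2$); the level argument is only needed, and suffices, to exclude pattern (i). Likewise, in part (1) the phrase ``(ii) would require $\ell=\ell+1$, absurd'' is valid only for $p\ge 2$ (for $p=1$ all region indices coincide, and there the paper's separate convention for $p=1$ -- dropping the second condition in the definition of $\mathfrak{U}^{\bot}$ -- is what rescues the statement; note that for $p=1$ the index pattern of (ii) actually holds). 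With these two adjustments the proof is complete and coincides with the paper's, which likewise argues by exhibiting the forbidden crossing pattern, citing the index inequalities $i-1<i<i+1<j$ and $i<j-1<j<j+1$.
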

\begin{proof}
For $i\neq 1$, if $(i-1,i+1,\ell)\in\mathfrak{U}^\bot$, this is a contradiction with the fact that $(i,j,\ell)\in{^\bot(\mathfrak{U}^\bot)}$ and $i-1<i<i+1<j$. Similarly, for $j\neq N$, if $(j-1,j+1,\ell-1)\in\mathfrak{U}^\bot$, this is a contradiction with the fact that $(i,j,\ell)\in{^\bot(\mathfrak{U}^\bot)}$ and $i<j-1<j<j+1$.
\end{proof}
For any vertex $a$ in $\Pi^p$, set
$$(a,-,\ell)=\{(a,b,\ell)|(a,b,\ell)\text{\;is\;a\;diagonal\;in\;}\Pi^p\text{\;with\;}a<b\}.$$
Specifically, $(a,-,\ell)$ is the set of diagonals in $\Pi^p$ with $a$ as the node such that for any $(a,b,\ell)\in(a,-,\ell)$, we have $a<b$. Dually, we set
$$(-,a,\ell)=\{(c,a,\ell)|(c,a,\ell)\text{\;is\;a\;diagonal\;in\;}\Pi^p\text{\;with\;}c<a\}.$$

Note that since $(a,-,\ell)$ is a finite set, there exists a maximal integer $b$ and a minimal integer $c$ such that $(a,b,\ell)\in(a,-,\ell)$, $(a,c,\ell)\in(a,-,\ell)$. A similar result holds for the set $(-,a,\ell)$.
\begin{lemma}\label{lemma3}
Suppose $\mathfrak{U}$ is a Ptolemy diagram and $a$ is a vertex in $\Pi^p$ satisfying $(a,-,\ell)\cap\mathfrak{U}\neq\emptyset$ and $(-,a,\ell+1)\cap\mathfrak{U}\neq\emptyset$. Suppose $b$ is maximal such that $(a,b,\ell)\in\mathfrak{U}$, and $c$ is minimal such that $(c,a,\ell+1)\in\mathfrak{U}$. Then $(c,b,\ell)$ is a diagonal in $\mathfrak{U}^\bot$.
\end{lemma}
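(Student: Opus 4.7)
The plan is to fix $u=(c,b,\ell)$ and verify the defining property of $\mathfrak{U}^{\bot}$ directly. As a preliminary, $c<a<b$ gives $b-c\ge 2$, so $(c,b,\ell)$ is a non-degenerate, non-boundary diagonal of $\Pi^p$ provided one rules out the inner arc $(1,N,\ell)$; I would dispose of that corner case separately. I then take an arbitrary $v=(i,j,\ell')\in\mathfrak{U}$ crossing $u$. By Definition \ref{cross}, the crossing falls into exactly one of four configurations:
\begin{itemize}
\item[(A1)] $\ell'=\ell$ and $c<i<b<j$;
\item[(A2)] $\ell'=\ell$ and $i<c<j<b$;
\item[(B)]  $\ell'=\ell+1$ and $i<c<j<b$;
\item[(C)]  $\ell'=\ell-1$ and $c<i<b<j$.
\end{itemize}
Cases (A2) and (C) are exactly the two shapes permitted in the definition of $\mathfrak{U}^{\bot}$, so the lemma reduces to ruling out (A1) and (B).

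I plan to treat (A1) by a trichotomy on the position of $i$ relative to $a$. If $i=a$, then $v=(a,j,\ell)\in\mathfrak{U}$ with $j>b$ contradicts the maximality of $b$ outright. If $a<i$, the diagonals $v$ and $(a,b,\ell)$ both sit in $\Pi_\ell$ and cross as $a<i<b<j$, so (Pt1) produces $(a,j,\ell)\in\mathfrak{U}$, the same contradiction. If $i<a$, then $v\in\Pi_\ell$ and $(c,a,\ell+1)\in\Pi_{\ell+1}$ cross across adjacent regions as $c<i<a<j$ (the inequality $a<j$ coming from $a<b<j$), and (Pt2) yields $(a,j,\ell)\in\mathfrak{U}$. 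The argument for (B) is the symmetric trichotomy on $j$ versus $a$: $j=a$ gives $v=(i,a,\ell+1)\in\mathfrak{U}$ with $i<c$, against the minimality of $c$; $j<a$ puts $v$ and $(c,a,\ell+1)$ into a crossing inside $\Pi_{\ell+1}$ (since $i<c<j<a$), and (Pt1) feeds $(i,a,\ell+1)$ into $\mathfrak{U}$; $j>a$ gives a cross-level crossing of $v$ and $(a,b,\ell)$ (since $i<a<j<b$), and (Pt2) yields the same element. Each branch ends in a violation of the extremality of $b$ or $c$.

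I expect the one genuinely tedious aspect to be the routine bookkeeping: in every subcase one must verify that the auxiliary element furnished by a Ptolemy axiom, namely $(a,j,\ell)$ in the (A1) branches and $(i,a,\ell+1)$ in the (B) branches, is actually a diagonal of $\Pi^p$ rather than a boundary. Non-adjacency follows from the strict chains $c<i<a$ and $a<b<j$, which force $i\le a-2$ and $j\ge a+2$; the inner arc $(1,N,\cdot)$ is excluded via the standing bounds $a\ge 2$ (from $c<a$) and $a\le N-1$ (from $a<b\le N$). The subtlest point is the analogous check for $u=(c,b,\ell)$ itself, since neither $c>1$ nor $b<N$ is automatic from the hypotheses; handling this degenerate corner is the only nontrivial obstacle in an otherwise mechanical Ptolemy-plus-extremality proof.
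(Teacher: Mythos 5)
Your proposal is correct and follows essentially the same route as the paper's proof: rule out the two forbidden crossing types against $(c,b,\ell)$ by a trichotomy on the position of the crossing diagonal relative to $a$, invoking the Ptolemy conditions together with the maximality of $b$ and minimality of $c$ (your citation of (Pt2) where the paper uses (Pt3) in one branch is only a relabelling of which diagonal plays $X$ and which plays $Y$). The boundary bookkeeping you flag, including the degenerate possibility $(c,b,\ell)=(1,N,\ell)$, is handled in the paper only implicitly via the convention that boundary segments are degenerate diagonals crossing nothing, so it does not constitute a substantive difference.
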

\begin{proof}
Suppose on the contrary $(c,b,\ell)\notin\mathfrak{U}^\bot$. Then then there exists a diagonal $w\in\mathfrak{U}$ such that $w=(s,t,k)$ crosses  $(c,b,\ell)$, We have the following two cases:
      \begin{itemize}
        \item [(1)] $k=\ell$ and $c<s<b<t$. We have the following three cases:
        \begin{itemize}
          \item [(i)] If $s<a$, then $w=(s,t,\ell)$ crosses $(c,a,\ell+1)$ with $c<s<a<t$. Since $\mathfrak{U}$ is a Ptolemy diagram, we have $(a,t,\ell)\in\mathfrak{U}$ by Pt2. This is a contradiction with the maximality of $b$.
          \item [(ii)] If $s=a$, then $w=(a,t,\ell)\in\mathfrak{U}$. This is a contradiction with the maximality of $b$.
          \item [(iii)] If $s>a$, then $w=(s,t,\ell)$ crosses $(a,b,\ell)$ with $a<s<b<t$. Since $\mathfrak{U}$ is a Ptolemy diagram, we have $(a,t,\ell)\in\mathfrak{U}$ by Pt1. This is a contradiction with the maximality of $b$.
        \end{itemize}
        \item [(2)] $k=\ell+1$ and $s<c<t<b$. We have the following three cases:
        \begin{itemize}
          \item [(i)] If $t<a$, then $w=(s,t,\ell+1)$ crosses $(c,a,\ell+1)$ with $s<c<t<a$. Since $\mathfrak{U}$ is a Ptolemy diagram, we have $(s,a,\ell+1)\in\mathfrak{U}$ by Pt1. This is a contradiction with the minimality of $c$.
          \item [(ii)] If $t=a$, then $w=(s,a,\ell+1)\in\mathfrak{U}$. This is a contradiction with the minimality of $c$.
          \item [(iii)] If $t>a$, then $w=(s,t,\ell+1)$ crosses $(a,b,\ell)$ with $s<a<t<b$. Since $\mathfrak{U}$ is a Ptolemy diagram, we have $(s,a,\ell+1)\in\mathfrak{U}$ by Pt3. This is a contradiction with the minimality of $c$.
        \end{itemize}
      \end{itemize}
Thus we proved $(c,b,\ell)\in\mathfrak{U}^\bot$.
\end{proof}
\begin{corollary}\label{coro1}
Suppose $\mathfrak{U}$ is a Ptolemy diagram and $a(a\neq1)$ is a vertex in $\Pi^p$ satisfying $(a,-,\ell)\cap\mathfrak{U}\neq\emptyset$ and $(-,a,\ell+1)\cap\mathfrak{U}=\emptyset$. Suppose $b$ is maximal such that $(a,b,\ell)\in\mathfrak{U}$. Then $(a-1,b,\ell)$ is a diagonal in $\mathfrak{U}^\bot$.
\end{corollary}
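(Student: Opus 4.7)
The plan is to mimic the structure of the proof of Lemma \ref{lemma3}, with the vertex $c = a-1$ playing the role that the minimal $c$ with $(c,a,\ell+1) \in \mathfrak{U}$ played before. Intuitively, in Lemma \ref{lemma3} the vertex $c$ was chosen so that everything to the left of $c$ (at level $\ell+1$) on diagonals ending at $a$ was absent from $\mathfrak{U}$; here, by the hypothesis $(-,a,\ell+1) \cap \mathfrak{U} = \emptyset$, the analogous ``leftmost safe vertex'' is simply $a-1$, the vertex immediately preceding $a$.

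I would argue by contradiction. Suppose $(a-1,b,\ell) \notin \mathfrak{U}^\perp$. Then by the definition of $\mathfrak{U}^\perp$, there exists $w = (s,t,k) \in \mathfrak{U}$ crossing $(a-1,b,\ell)$, and by Definition \ref{cross} we split into two cases:
\begin{itemize}
\item[(1)] $k = \ell$ with $a-1 < s < b < t$;
\item[(2)] $k = \ell+1$ with $s < a-1 < t < b$.
\end{itemize}

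In case (1), $s > a-1$ forces $s \geq a$. If $s = a$, then $w = (a,t,\ell) \in \mathfrak{U}$ with $t > b$ directly contradicts the maximality of $b$. If $s > a$, then $w$ crosses $(a,b,\ell)$ in the pattern $a < s < b < t$ at the same level, so (Pt1) forces $(a,t,\ell) \in \mathfrak{U}$, again contradicting the maximality of $b$. In case (2), $t > a-1$ forces $t \geq a$. If $t = a$, then $w = (s,a,\ell+1) \in \mathfrak{U}$ is an element of $(-,a,\ell+1) \cap \mathfrak{U}$, contradicting the hypothesis that this intersection is empty. If $t > a$, then $w = (s,t,\ell+1)$ crosses $(a,b,\ell)$ in the pattern $s < a < t < b$ with levels differing by one, so (Pt3) applied to $(a,b,\ell)$ and $w$ forces $(s,a,\ell+1) \in \mathfrak{U}$, again contradicting $(-,a,\ell+1) \cap \mathfrak{U} = \emptyset$.

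Both cases yield a contradiction, so $(a-1,b,\ell) \in \mathfrak{U}^\perp$. The only real bookkeeping issue is matching the correct Ptolemy condition to each crossing configuration, namely (Pt1) for same-level crossings and (Pt3) for the level-$\ell$/level-$(\ell+1)$ crossing in the orientation arising in case (2); this is routine once the case split is laid out carefully.
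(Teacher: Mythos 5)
Your proof is correct and is essentially the paper's own argument: the paper handles this corollary in one line by viewing the boundary edge $(a-1,a)$ as a degenerate diagonal playing the role of $(c,a,\ell+1)$ and invoking Lemma \ref{lemma3}, and your case analysis is precisely the proof of that lemma rerun with the hypothesis $(-,a,\ell+1)\cap\mathfrak{U}=\emptyset$ replacing the minimality of $c$. Your direct verification (including the applications of (Pt1) and (Pt3), which match the paper's usage in the proof of Lemma \ref{lemma3}) in fact substantiates the paper's informal degenerate-diagonal shortcut.
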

\begin{proof}
Since $(-,a,\ell+1)\cap\mathfrak{U}=\emptyset$, the edge $(a-1,a,\ell)$ can be seen as a degenerated diagonal of $\mathfrak{U}$. Then the result is a direct consequence of Lemma \ref{lemma3}.
\end{proof}
\begin{corollary}\label{coro2}
Suppose $\mathfrak{U}$ is a Ptolemy diagram and $a(a\neq N)$ is a vertex in $\Pi^p$ satisfying $(-,a,\ell+1)\cap\mathfrak{U}\neq\emptyset$ and $(a,-,\ell)\cap\mathfrak{U}=\emptyset$. Suppose $c$ is minimal such that $(c,a,\ell+1)\in\mathfrak{U}$. Then $(c,a+1,\ell)$ is a diagonal in $\mathfrak{U}^\bot$.
\end{corollary}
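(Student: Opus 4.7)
The plan is to mirror the strategy of Corollary \ref{coro1}: treat the boundary edge $(a,a+1,\ell)$ as a ``degenerated'' element playing the role of $(a,b,\ell)$ in Lemma \ref{lemma3}. Since $(a,-,\ell)\cap\mathfrak{U}=\emptyset$, the missing maximality of $b$ is replaced by this emptiness condition, and the rest of the case analysis of Lemma \ref{lemma3} should transfer verbatim with $b:=a+1$. As a first sanity check, I would verify that $(c,a+1,\ell)$ really is a diagonal of $\Pi^p$: since $(c,a,\ell+1)\in\mathfrak{U}$ is a diagonal we have $c\le a-2$, so $c$ and $a+1$ are non-adjacent in $\Pi_\ell$, and $a+1\le N$ because $a\ne N$.

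Next I would suppose, for contradiction, that $(c,a+1,\ell)\notin\mathfrak{U}^\bot$ and produce some $w=(s,t,k)\in\mathfrak{U}$ crossing it. By Definition \ref{cross}, there are two cases: either $k=\ell$ with $c<s<a+1<t$, or $k=\ell+1$ with $s<c<t<a+1$. In the first case, $s\le a$. If $s=a$, then $w=(a,t,\ell)\in(a,-,\ell)\cap\mathfrak{U}$, directly contradicting the hypothesis. If $s<a$, then $w$ crosses $(c,a,\ell+1)$ with $c<s<a<t$, so by \conb\ applied in $\mathfrak{U}$ we get $(a,t,\ell)\in\mathfrak{U}$, again contradicting $(a,-,\ell)\cap\mathfrak{U}=\emptyset$.

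In the second case $k=\ell+1$ and $t\le a$. If $t=a$, then $w=(s,a,\ell+1)\in\mathfrak{U}$ with $s<c$, contradicting minimality of $c$. If $t<a$, then $w$ and $(c,a,\ell+1)$ are both in $\Pi_{\ell+1}$ with $s<c<t<a$, so by \cona\ we get $(s,a,\ell+1)\in\mathfrak{U}$ (after checking it is a genuine diagonal, which follows from $s<c<t<a$), again contradicting minimality of $c$. Thus no such $w$ exists, and $(c,a+1,\ell)\in\mathfrak{U}^\bot$.

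There is no real obstacle here; the only subtlety is bookkeeping the degeneracy at $s=a$ (resp.\ $t=a$) separately, because in that borderline situation one does not need to invoke a Ptolemy axiom at all — the crossing diagonal $w$ is itself already the contradictory object. Once that degenerate subcase is peeled off, the remaining work is exactly the subcases (1)(i), (1)(iii), (2)(i), (2)(iii) of the proof of Lemma \ref{lemma3}, with $(a,b,\ell)$ replaced by the formal edge $(a,a+1,\ell)$.
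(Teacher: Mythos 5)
Your proof is correct and follows essentially the paper's intended route: the paper proves Corollary \ref{coro1} by treating a boundary edge as a degenerate diagonal and invoking Lemma \ref{lemma3}, and leaves Corollary \ref{coro2} to the symmetric argument, which is exactly what you carry out (and verify in detail) with $b=a+1$. The only blemish is the closing reference to subcases (1)(iii) and (2)(iii) of Lemma \ref{lemma3}, which are vacuous here since $s<a+1$ and $t<a+1$; your actual case analysis ($s=a$, $s<a$, $t=a$, $t<a$) is complete and matches the paper's.
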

\begin{proposition}\label{prop1}
$\mathfrak{U}$ is a Ptolemy diagram if and only if $\mathfrak{U}={^\bot(\mathfrak{U}^\bot)}$.
\end{proposition}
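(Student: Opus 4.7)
The ``if'' direction is immediate from Lemma~\ref{lemma1}: if $\mathfrak{U} = {^\bot(\mathfrak{U}^\bot)}$, then $\mathfrak{U}$ has the form $^\bot\mathfrak{V}$ for $\mathfrak{V} = \mathfrak{U}^\bot$, and any such set is a Ptolemy diagram.

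For the converse, suppose $\mathfrak{U}$ is a Ptolemy diagram. The inclusion $\mathfrak{U} \subseteq {^\bot(\mathfrak{U}^\bot)}$ is immediate from comparing the definitions of $(-)^\bot$ and $^\bot(-)$: the two ``allowed'' crossing patterns match up once one interchanges the roles of the two diagonals. The substantive step is the reverse inclusion ${^\bot(\mathfrak{U}^\bot)} \subseteq \mathfrak{U}$, which I would prove by contraposition. Given a diagonal $x = (i, j, \ell) \notin \mathfrak{U}$, the goal is to produce some $w = (i', j', \ell') \in \mathfrak{U}^\bot$ whose crossing with $x$ is of a pattern forbidden by the definition of $^\bot(-)$, namely either $\ell' = \ell$ with $i' < i < j' < j$, or $\ell' = \ell - 1$ with $i < i' < j < j'$; such a $w$ witnesses $x \notin {^\bot(\mathfrak{U}^\bot)}$.

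I would first examine the short diagonals $(i-1, i+1, \ell)$ and $(j-1, j+1, \ell-1)$: if either of them lies in $\mathfrak{U}^\bot$, it already supplies a forbidden crossing with $x$ (this is essentially the contrapositive of Lemma~\ref{lemma2}). Otherwise, unwinding the definition of $\mathfrak{U}^\bot$ at these short diagonals forces $\mathfrak{U}$ to contain a diagonal incident to $i$ in either $\Pi_\ell$ or $\Pi_{\ell+1}$, and analogously at vertex $j$. I would then split into cases according to which of these options occur, and in each case invoke Lemma~\ref{lemma3} (or Corollary~\ref{coro1}/\ref{coro2} when only one of the two options is available at a given endpoint) to construct the required $w \in \mathfrak{U}^\bot$ from extremal endpoints --- for instance, with $b$ maximal such that $(i, b, \ell) \in \mathfrak{U}$ and $c$ minimal such that $(c, i, \ell+1) \in \mathfrak{U}$, Lemma~\ref{lemma3} gives $w = (c, b, \ell) \in \mathfrak{U}^\bot$. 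The hypothesis $x \notin \mathfrak{U}$ forces these extremal endpoints not to coincide with $j$ or $i$ respectively, and a short calculation then verifies that $w$ crosses $x$ in the forbidden pattern; if the extremal $b$ at vertex $i$ overshoots $j$, I would pivot to the analogous construction anchored at vertex $j$.

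The main obstacle is the bookkeeping in this case analysis --- in particular handling the boundary positions $i = 1$ or $j = N$ (where the short diagonals of Lemma~\ref{lemma2} degenerate to polygon edges and one must switch to the corollaries) and the subcases where the extremal endpoint produced at one vertex forces one to switch to the construction at the other vertex. The Ptolemy hypothesis enters exactly through Lemma~\ref{lemma3} and its corollaries, which are what certify that the constructed $w$ indeed lies in $\mathfrak{U}^\bot$.
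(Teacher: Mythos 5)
Your overall skeleton is the paper's own, up to contraposition: the ``if'' direction via Lemma~\ref{lemma1}, the easy inclusion $\mathfrak{U}\subseteq{^\bot(\mathfrak{U}^\bot)}$, and for the hard inclusion the production of a diagonal of $\mathfrak{U}^\bot$ crossing $x=(i,j,\ell)$ in a forbidden pattern, obtained from the short diagonals of Lemma~\ref{lemma2} and from extremal diagonals at a vertex via Lemma~\ref{lemma3} and Corollaries~\ref{coro1}, \ref{coro2}. The identification of the two forbidden patterns is also correct. The gap is in the step you wave off as bookkeeping: the ``pivot'' to vertex $j$ does not by itself close the argument. If the maximal $b$ with $(i,b,\ell)\in\mathfrak{U}$ overshoots ($b>j$), you move to vertex $j$; but the same failure can recur there: when $(-,j,\ell)\cap\mathfrak{U}\neq\emptyset$ with minimal $c'$ satisfying $c'<i$, Lemma~\ref{lemma3} (or Corollary~\ref{coro2}) hands you a diagonal $(c',b',\ell-1)$ (resp.\ $(c',j+1,\ell-1)$) of $\mathfrak{U}^\bot$ with $c'<i<j<b'$, which does not cross $x$ at all, hence is no witness --- and there is no third vertex to pivot to. This double-overshoot case is not vacuous on its face and your sketch contains no mechanism for it. It can be repaired, but by an argument of a different kind than ``construct a witness in $\mathfrak{U}^\bot$'': in that case $\mathfrak{U}$ contains both $(i,b,\ell)$ with $b>j$ and $(c',j,\ell)$ with $c'<i$; these cross in $\Pi_\ell$, and condition (Pt1) then puts $(i,j,\ell)$ into $\mathfrak{U}$, contradicting $x\notin\mathfrak{U}$.

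For comparison, the paper never pivots: it works at vertex $i$ throughout (vertex $j$ enters only for the boundary case $i=1$, where necessarily $j\neq N$). After showing $(i,-,\ell)\cap\mathfrak{U}\neq\emptyset$, it proves the two-sided claim that $\mathfrak{U}$ contains diagonals $(i,b,\ell)$ with $b\geq j$ \emph{and} $(i,c,\ell)$ with $c\leq j$ --- the second half being the delicate part, argued by showing $(i+1,c,\ell-1)\notin\mathfrak{U}^\bot$ and playing (Pt1)/(Pt2) against the minimality of $c$ --- and then squeezes: with $c$ maximal $\leq j$ and $b$ minimal $\geq j$, if $c<j<b$ then $(c,b,\ell-1)$ crosses $u$ forbiddenly, so it is not in $\mathfrak{U}^\bot$, and any element of $\mathfrak{U}$ crossing it badly contradicts the extremality of $b$ or $c$ via (Pt1)/(Pt2). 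So either patch your pivot with the (Pt1) observation above, or replace the pivot by the paper's two-sided claim at vertex $i$; as written, your case analysis does not cover all cases, and it is exactly the case where the paper invests most of its effort.
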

\begin{proof}
Suppose $\mathfrak{U}={^\bot(\mathfrak{U}^\bot)}$. Then $\mathfrak{U}$ is a Ptolemy diagram by Lemma \ref{lemma1}.

Now suppose $\mathfrak{U}$ is a Ptolemy diagram. The inclusion $\mathfrak{U}\subseteq {^\bot(\mathfrak{U}^\bot)}$ is clear. (Note that this inclusion does not require $\mathfrak{U}$ to be a Ptolemy diagram.) Suppose $u=(i,j,\ell)$ is a diagonal in $^\bot(\mathfrak{U}^\bot)$, we need to show $u\in\mathfrak{U}$.

By assumption, we have $u=(i,j,\ell)\in{^\bot(\mathfrak{U}^\bot)}$. Suppose $i\neq1$, we have $(i-1,i+1,\ell)\notin\mathfrak{U}^\bot$ by Lemma \ref{lemma2}. So there exists a diagonal $w=(s,t,k)\in\mathfrak{U}$ such that $w$ crosses $(i-1,i+1,\ell)$. We have the following two cases:
\begin{itemize}
  \item [(1)] $k=\ell$ and $i-1<s<i+1<t$. This implies $w=(i,t,\ell)$. So $(i,-,\ell)\cap\mathfrak{U}\neq\emptyset$.
  \item [(2)] $k=\ell+1$ and $s<i-1<t<i+1$. This implies $w=(s,i,\ell+1)$. So $(-,i,\ell+1)\cap\mathfrak{U}\neq\emptyset$.
\end{itemize}

\smallskip

We claim: $(i,-,\ell)\cap\mathfrak{U}\neq\emptyset$. 

Suppose on the contrary $(i,-,\ell)\cap\mathfrak{U}=\emptyset$. Then we have $(-,i,\ell+1)\cap\mathfrak{U}\neq\emptyset$. Suppose $c$ is minimal such that $(c,i,\ell+1)\in\mathfrak{U}$, then $(c,i+1,\ell)\in\mathfrak{U}^\bot$ by Corollary \ref{coro2}. This is a contradiction with the fact that $u=(i,j,\ell)\in{^\bot(\mathfrak{U}^\bot)}$ and $c<i<i+1<j$. This completes the proof of our claim.

Now we have $(i,-,\ell)\cap\mathfrak{U}\neq\emptyset$. We claim: there are diagonals $(i,b,\ell)$ with $b\geq j$ and diagonals $(i,c,\ell)$ with $c\leq j$ in $\mathfrak{U}$. 

Suppose $b$ is maximal such that $(i,b,\ell)\in\mathfrak{U}$ and $b<j$. Suppose $d$ is minimal such that $(d,i,\ell+1)\in\mathfrak{U}$ (If $(-,i,\ell+1)\cap\mathfrak{U}=\emptyset$, then $d$ is taken to be $i-1$). Then $(d,b,\ell)\in\mathfrak{U}^\bot$ by Lemma \ref{lemma3}. This is a contradiction with the fact that $(d,b,\ell)$ crosses $u=(i,j,\ell)\in{^\bot(\mathfrak{U}^\bot)}$ and $d<i<b<j$. So there exist diagonals $(i,b,\ell)$ with $b\geq j$ in $\mathfrak{U}$.

Suppose $c$ is minimal such that $(i,c,\ell)\in\mathfrak{U}$. If $c>j$, then $(i+1,c,\ell-1)$ is a diagonal crossing $(i,j,\ell)$ with $i<i+1<j<c$. So we have $(i+1,c,\ell-1)\not\in\mathfrak{U}^\bot$. Then there exists a diagonal $w=(s,t,k)\in\mathfrak{U}$ crossing $(i+1,c,\ell-1)$. We have the following two cases:
\begin{itemize}
  \item [(i)] $k=\ell-1$ and $i+1<s<c<t$. So $w=(s,t,\ell-1)$ crosses $(i,c,\ell)$ and $i<s<c<t$. Since $\mathfrak{U}$ is a Ptolemy diagram, we have $(i,s,\ell)$ by Pt2. This is a contradiction with the minimality of $c$.
  \item [(ii)] $k=\ell$ and $s<i+1<t<c$. So $w=(s,t,\ell)$ crosses $(i,c,\ell)$ and $s<i<t<c$ (obviously $s\neq i$). Since $\mathfrak{U}$ is a Ptolemy diagram, we have $(i,t,\ell)$ by Pt1. This is a contradiction with the minimality of $c$.
\end{itemize}
So there exist diagonals $(i,c,\ell)$ with $c\leq j$ in $\mathfrak{U}$. This completes the proof of our claim.

\smallskip

Suppose $c$ is maximal such that $(i,c,\ell)\in\mathfrak{U}$ with $c\leq j$, and suppose $b$ is minimal such that $(i,b,\ell)\in\mathfrak{U}$ with $b\geq j$. If $c=j$ or $b=j$, then $(i,j,\ell)\in\mathfrak{U}$, which completes our proof.

Now suppose $c<j$ and $b>j$, we show there is a contradiction.

By assumption, $(c,b,\ell-1)$ is a diagonal crossing $(i,j,\ell)$ with $i<c<j<b$. Then $(c,b,\ell-1)\not\in\mathfrak{U}^\bot$. So there exists $w=(s,t,k)\in\mathfrak{U}$ such that $w$ crosses $(c,b,\ell-1)$. We have the following two cases:
\begin{itemize}
  \item [(i)] $k=\ell-1$ and $c<s<b<t$. So $w=(s,t,\ell-1)$ crosses $(i,b,\ell)$ and $i<s<b<t$. Since $\mathfrak{U}$ is a Ptolemy diagram, we have $(i,s,\ell)$ by Pt2. This is a contradiction with the minimality of $b$ or the maximality of $c$.
  \item [(ii)] $k=\ell$ and $s<c<t<b$. If $s>i$ (obviously $s\neq i$), then $w=(s,t,\ell)$ crosses $(i,c,\ell)$ with $i<s<c<t$. Since $\mathfrak{U}$ is a Ptolemy diagram, we have $(i,t,\ell)$ by Pt1. This is a contradiction with the maximality of $c$. If $s<i$, then $w=(s,t,\ell)$ crosses $(i,b,\ell)$ with $s<i<t<b$. Since $\mathfrak{U}$ is a Ptolemy diagram, we have $(i,t,\ell)$ by Pt1. This is a contradiction with the minimality of $b$ or the maximality of $c$. 
\end{itemize}
If $i=1$, then $j\neq N$ (since $(i,j,\ell)$ is a diagonal) and $(j-1,j+1,\ell-1)\notin\mathfrak{U}^\bot$ by Lemma \ref{lemma2}. We can prove similarly that $(i,j,\ell)\in\mathfrak{U}$ as the case $(i-1,i+1,\ell)\notin\mathfrak{U}^\bot$.
\end{proof}

We are now ready to prove the main result Theorem \ref{mainresult}.

$\bold{Proof\; of\; Theorem\; \ref{mainresult}}$:
The equivalence of (1)-(3) is obvious by Lemma \ref{lemma0}. By Lemma \ref{lem2}, the subcategory  $\X^\perp[-1]$ (resp. {$^\perp\X[1]$}) corresponds to $\XX^\perp$ (resp. $^\perp\XX$). The equivalence of (3) and (4) is a direct consequence of Proposition \ref{prop1}.

\section{Applications}

\subsection{Classification of rigid objects and cluster tilting objects in $\C_{n,p}$ }
Firstly, we recall the definition of rigid subcategories and cluster tilting subcategories in $\C_{n,p}$.

\begin{definition}[\cite{KR,T,Z,IY,J}]
Let $\X$ be a subcategory of $\C_{n,p}$.
\begin{itemize}
  \item [(1)] The subcategory $\X$ is called rigid if $\Ext_{\C_{n,p}}^{1}(\X,\X)=0$. The subcategory $\X$ is called maximal rigid if $\X$ is maximal with respect to this property, i.e. if $\Ext_{\C_{n,p}}^{1}(\X\oplus\add M,\X\oplus\add M)=0$, then $M\in\X$. An object $X$ is called a rigid object if $\add X$ is rigid, $X$ is called a maximal rigid object if $\add X$ is maximal rigid.
  \item [(2)] $\X$ is called cluster tilting if $X\in\X$ if and only if
      $\Ext_{\C_{n,p}}^{1}(X,\X)=0$ if and only if $\Ext_{\C_{n,p}}^{1}(\X,X)=0$.
      An object $X$ is called a cluster tilting object if $\add\;X$ is cluster tilting.
\end{itemize}
We point out the fact that for an object $X$ of $\C_{n,p}$, $X$ is cluster tilting if and only if $X$ is  maximal rigid \cite[Theorem 2.6]{ZZ}.
\end{definition}
\begin{definition}
Let $\mathfrak{U}$ be a set of diagonals in $\Pi^p$, and $\mathfrak{U}=\mathfrak{U}\cap\Pi^p=\mathfrak{U}_1\cup \mathfrak{U}_2\cup\ldots\cup \mathfrak{U}_p$ where $\mathfrak{U}_k\subseteq\Pi_k$, $1\leq k\leq p$.
\begin{itemize}
  \item [(1)] $\mathfrak{U}_k$ is called a partial triangulation of $\Pi_k$ if $\mathfrak{U}_k$ is a set of non-crossing diagonals in $\Pi_k$.
  \item [(2)] $\mathfrak{U}_k$ is called a triangulation of $\Pi_k$ if $\mathfrak{U}_k$ is a maximal set of non-crossing diagonals in $\Pi_k$.
\end{itemize}
\end{definition}
\begin{proposition}\label{prop2}
Let $\X$ be a subcategory of $\C_{n,p}$, and $\XX$ be the corresponding set of diagonals in $\Pi^p$ such that $\XX=\XX_1\cup \XX_2\cup\ldots\cup \XX_p$. Then the following statements are equivalent.
\begin{itemize}
  \item [(1)] $\X$ is rigid.
  \item [(2)] For all $1\leq k\leq p$, $\XX_k$ is a partial triangulation such that the diagonals in $\XX_k$ and $\XX_{k+1}$ do not cross, and the diagonals in $\XX_k$ and $\XX_{k-1}$ do not cross.
  \item [(3)] $(\X,\X^\perp)$ is a torsion pair in $\C_{n,p}$ such that $\X\subseteq\X^\perp[-1]\cap{^\perp\X[1]}$.
\end{itemize}
\end{proposition}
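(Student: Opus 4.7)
The plan is to prove the two equivalences $(1)\Leftrightarrow(2)$ and $(1)\Leftrightarrow(3)$ separately. The first is essentially a dictionary between the categorical crossing of Lemma \ref{lem2} and the combinatorial partial-triangulation condition, while the second is a repackaging of rigidity in terms of orthogonal subcategories combined with Lemma \ref{lemma0}.

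For $(1)\Leftrightarrow(2)$, I would argue directly from Lemma \ref{lem2}: two indecomposables $X=(i,j,\ell)$ and $Y=(i',j',\ell')$ satisfy $\Ext^1_{\C_{n,p}}(X,Y)\neq 0$ or $\Ext^1_{\C_{n,p}}(Y,X)\neq 0$ exactly when they cross in the sense of Definition \ref{cross}. Hence $\X$ is rigid if and only if no two diagonals of $\XX$ cross. Case (1) of Definition \ref{cross} produces a crossing inside a single region $\Pi_\ell$, so ruling it out forces each $\XX_k$ to be a partial triangulation of $\Pi_k$. Cases (2) and (3) of Definition \ref{cross} are each other's mirror image under the swap $(X,\ell)\leftrightarrow(Y,\ell')$; together they describe precisely when a diagonal of $\XX_k$ crosses one of $\XX_{k+1}$, which is the same as a diagonal of $\XX_{k-1}$ crossing one of $\XX_k$. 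Ruling these out gives exactly the adjacency condition in (2), and the translation is bidirectional.

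For $(1)\Leftrightarrow(3)$, I would first observe that $\Ext^1_{\C_{n,p}}(\X,\X)=\Hom_{\C_{n,p}}(\X,\X[1])$, so rigidity is equivalent to $\X[1]\subseteq \X^\perp$, i.e.\ $\X\subseteq \X^\perp[-1]$, and, dually, to $\X\subseteq {}^\perp\X[1]$. Thus the containment $\X\subseteq \X^\perp[-1]\cap{}^\perp\X[1]$ in (3) is a direct restatement of (1). It remains to check that rigidity forces $(\X,\X^\perp)$ to be a torsion pair, which by Lemma \ref{lemma0} amounts to $\X$ being closed under extensions. This is immediate from rigidity: for any triangle $X\to M\to Y\to X[1]$ with $X,Y\in\X$ the vanishing $\Ext^1_{\C_{n,p}}(Y,X)=0$ forces it to split, so $M\cong X\oplus Y\in\X$. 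The converse direction of $(3)\Rightarrow(1)$ is trivial, since the single inclusion $\X\subseteq\X^\perp[-1]$ already re-encodes $\Ext^1_{\C_{n,p}}(\X,\X)=0$.

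The only real obstacle I anticipate is notational: one must verify carefully that the asymmetric conditions (2) and (3) of Definition \ref{cross} aggregate into the symmetric ``no crossing between $\XX_k$ and $\XX_{k+1}$'' (equivalently between $\XX_{k-1}$ and $\XX_k$) that appears in the statement. Beyond that bookkeeping, the proof uses no machinery other than Lemma \ref{lem2}, Lemma \ref{lemma0}, and the fact that a distinguished triangle with vanishing connecting $\Ext^1$ splits.
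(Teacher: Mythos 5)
Your proposal is correct and follows essentially the same route as the paper: the equivalence $(1)\Leftrightarrow(2)$ is read off from Lemma \ref{lem2} (crossing $=$ nonvanishing $\Ext^1$ in one direction or the other), and $(1)\Leftrightarrow(3)$ reduces to the torsion-pair criterion of Lemma \ref{lemma0}/Theorem \ref{mainresult}. You merely make explicit the routine steps the paper leaves implicit, namely that both inclusions in (3) restate rigidity and that rigidity forces extension-closedness via the splitting of triangles with zero connecting morphism.
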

\begin{proof}
The equivalence of $(1)$ and $(2)$ is clear by Lemma \ref{lem2}, and the equivalence of $(1)$ and $(3)$ is a direct consequence of Theorem \ref{mainresult}.
\end{proof}
\begin{corollary}\cite[Proposition 5.4]{L1}\label{coro3}
Let $\X$ be a subcategory of $\C_{n,p}$, and $\XX$ be the corresponding set of diagonals in $\Pi^p$ such that $\XX=\XX_1\cup \XX_2\cup\ldots\cup \XX_p$. Then the following statements are equivalent.
\begin{itemize}
  \item [(1)] $\X$ is cluster tilting.
  \item [(2)] $\XX_1$ is a triangulation of $\Pi$, and for all $2\leq k\leq p$, $\XX_{k}=\rho(\XX_{k-1})$.
  \item [(3)] $(\X,\X^\perp)$ is a torsion pair in $\C_{n,p}$ such that $\X=\X^\perp[-1]={^\perp\X[1]}$.
\end{itemize}
\end{corollary}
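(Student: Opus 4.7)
The plan is to prove $(1)\Leftrightarrow(3)$ by categorical reformulation and $(1)\Leftrightarrow(2)$ via Proposition~\ref{prop2} together with a maximality analysis. For $(1)\Leftrightarrow(3)$, using $\Ext^{1}_{\C_{n,p}}(A,B)=\Hom_{\C_{n,p}}(A,B[1])$, the vanishing conditions $\Ext^{1}(\X,X)=0$ and $\Ext^{1}(X,\X)=0$ translate respectively to $X\in\X^{\perp}[-1]$ and $X\in{^{\perp}\X[1]}$; hence the defining equivalences of cluster tilting become $\X=\X^{\perp}[-1]={^{\perp}\X[1]}$. Since such $\X$ is in particular rigid, it is extension closed, so $(\X,\X^{\perp})$ is a torsion pair by Lemma~\ref{lemma0}, giving $(3)$; the reverse direction is a direct unpacking.

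For $(2)\Rightarrow(1)$, observe that $\rho$ preserves the local vertex labels of each $\Pi_{k}$, so $\XX_{k}=\rho(\XX_{k-1})$ forces all $\XX_{k}$ to coincide with the triangulation $T:=\XX_{1}$ of the $N$-gon (as sets of label pairs). Rigidity then follows from Proposition~\ref{prop2}: each $\XX_{k}=T$ is a (partial) triangulation of $\Pi_{k}$, and since $T$ has no internal crossings in $\Pi$, neither $i'<i<j'<j$ nor $i<i'<j<j'$ occurs for $(i,j),(i',j')\in T$, so the cases in Definition~\ref{cross}(2) and (3) are vacuous. For maximality, any indecomposable $M=(i,j,\ell)\notin\XX$ has $(i,j)\notin T=\XX_{\ell}$, so $(i,j)$ crosses some $(i',j')\in T$ in $\Pi$, and Definition~\ref{cross}(1) gives that $M$ and $(i',j',\ell)\in\X$ cross in $\C_{n,p}$, yielding $\Ext^{1}\neq 0$ by Lemma~\ref{lem2}. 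Hence $\X$ is maximal rigid, so cluster tilting by \cite[Theorem 2.6]{ZZ}.

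For $(1)\Rightarrow(2)$, Proposition~\ref{prop2} gives that each $\XX_{k}$ is a partial triangulation of $\Pi_{k}$ with the cross-level non-crossing condition. Step A establishes that each $\XX_{k}$ is a full triangulation of $\Pi_{k}$: if $\XX_{\ell_0}$ omits a diagonal $(i,j)$ of $\Pi_{\ell_0}$ not crossing $\XX_{\ell_0}$, then $(i,j,\ell_0)\notin\X=\X^{\perp}[-1]$ combined with Lemma~\ref{lem2} forces the existence of $(a,b,\ell_0+1)\in\XX_{\ell_0+1}$ with $a<i<b<j$, while dually $\X={^{\perp}\X[1]}$ yields $(a',b',\ell_0-1)\in\XX_{\ell_0-1}$ with $i<a'<j<b'$; choosing $(i,j)$ extremally (e.g.\ with $j-i$ minimal) and invoking the Ptolemy property of $\XX$ from Theorem~\ref{mainresult} then produces a diagonal addable to $\X$ without breaking rigidity, contradicting maximality. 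Step B shows $\XX_{k+1}=\rho(\XX_{k})$: given each $\XX_{k}$ is a triangulation, cross-level compatibility of $\XX_{k}$ and $\XX_{k+1}$ defines a relation $T\preceq T'$ by requiring no $(i,j)\in T$ and $(i',j')\in T'$ with $i'<i<j'<j$, which I verify is a partial order on triangulations of the $N$-gon: antisymmetry follows since two triangulations with no mutual forbidden crossing must coincide, and transitivity follows from a case analysis on how a third triangulation mediates a given offending pair. The cyclic chain $\XX_{1}\preceq\XX_{2}\preceq\cdots\preceq\XX_{p}\preceq\XX_{1}$ then collapses by antisymmetry, forcing $\XX_{k}=\XX_{k+1}$ for all $k$.

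The hardest parts will be verifying transitivity of the relation $\preceq$ in Step B, which requires carefully controlling how a crossing pair interacts with a triangulation via crossings and nestings, and exhibiting in Step A the specific addable diagonal built from the constraints of Lemma~\ref{lem2} together with the Ptolemy conditions of $\XX$.
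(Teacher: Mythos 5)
Your handling of $(1)\Leftrightarrow(3)$ is fine (rigidity makes the relevant triangles split, so $\X$ is extension closed and Lemma \ref{lemma0} applies), and in $(2)\Rightarrow(1)$ the rigidity part via Proposition \ref{prop2} is correct. However, the final step of $(2)\Rightarrow(1)$ leans on ``maximal rigid $\Rightarrow$ cluster tilting'' via \cite[Theorem 2.6]{ZZ}. That theorem is about $2$-CY categories, while $\C_{n,p}$ for $p>1$ is only $\frac{2p}{p}$-CY with non-simplifiable fraction, and the implication actually fails here: the rigid object $(1,3,1)\oplus(2,4,3)$ of $\C_{3,4}$ discussed after Corollary \ref{coro4} lies in no cluster tilting subcategory, so any maximal rigid subcategory containing it is not cluster tilting. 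Fortunately your situation is easily repaired without that fact: for $M=(i,j,\ell)\notin\X$ the label pair $(i,j)$ crosses some $(i',j')\in T$, and since \emph{every} region carries $T$, the two copies $(i',j',\ell)$ and $(i',j',\ell\pm 1)$ together with Lemma \ref{lem2} give $\Ext^1_{\C_{n,p}}(\X,M)\neq 0$ \emph{and} $\Ext^1_{\C_{n,p}}(M,\X)\neq 0$, which verifies the cluster tilting property directly.

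The genuine gaps are in $(1)\Rightarrow(2)$, which is exactly the part the paper does not prove either (it quotes it from \cite[Proposition 5.4]{L1} and only adds reformulation (3) via Theorem \ref{mainresult}). In Step A, ``invoking the Ptolemy property of $\XX$'' has no force: since $\X$ is rigid, no two diagonals of $\XX$ cross, so the Ptolemy conditions are vacuously satisfied and produce no new diagonals; and the ``diagonal addable to $\X$'' is precisely what must be constructed, because a diagonal of $\Pi_{\ell_0}$ non-crossing with $\XX_{\ell_0}$ can still cross $\XX_{\ell_0\pm1}$ --- indeed your own deduction from $\X=\X^\perp[-1]={}^\perp\X[1]$ produces such crossings, so no contradiction is reached as written. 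In Step B, your antisymmetry argument is correct (mutual absence of the forbidden pattern makes $\XX_k\cup\XX_{k+1}$ non-crossing, forcing equality by maximality), but collapsing the cyclic chain $\XX_1\preceq\cdots\preceq\XX_p\preceq\XX_1$ by antisymmetry needs transitivity of $\preceq$, and this is the crux: it amounts to the statement that one-sided $\Ext^1$-vanishing between the lifts of triangulations to a fundamental domain of $\D$ defines a partial order (a silting/tilting-order type result), not a routine ``case analysis,'' and no argument is given. So both halves of the hard implication rest on unproved claims; the statements are true, but as it stands the proposal does not establish them.
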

\begin{corollary}\cite[Corollary 5.5]{L1}\label{coro4}
Any cluster tilting object in $\C_{n,p}$ contains $pn$ pairwise non-isomorphic summands.
\end{corollary}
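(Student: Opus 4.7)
The plan is to derive this as a direct counting consequence of Corollary \ref{coro3}. Given a cluster tilting object $X$ in $\C_{n,p}$, let $\X = \add X$ and let $\XX = \XX_1 \cup \XX_2 \cup \ldots \cup \XX_p$ be the corresponding set of diagonals in $\Pi^p$ provided by Lemma \ref{lem1}. By Corollary \ref{coro3}, we know that $\XX_1$ is a triangulation of the $(n+3)$-gon $\Pi$, and $\XX_k = \rho^{k-1}(\XX_1)$ for each $2 \leq k \leq p$.

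The next step is to invoke the classical fact that a triangulation of a convex $(n+3)$-gon contains exactly $n$ diagonals (the polygon is subdivided into $n+1$ triangles, and one obtains the count $n$ by a standard Euler characteristic or induction argument). Hence $|\XX_1| = n$. Since the rotation $\rho$ is a bijection on the set of diagonals of $\Pi^p$, each $\XX_k = \rho^{k-1}(\XX_1)$ also has exactly $n$ elements.

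It remains to observe that the union $\XX_1 \cup \XX_2 \cup \ldots \cup \XX_p$ is disjoint: diagonals in $\XX_k$ are by definition of the form $(i,j,k)$ with third coordinate $k$, so the regions $\Pi_1, \ldots, \Pi_p$ contribute pairwise distinct diagonals. Therefore $|\XX| = pn$. Since the bijection of Lemma \ref{lem1} matches diagonals of $\Pi^p$ with isoclasses of indecomposable objects in $\C_{n,p}$, the cluster tilting object $X$ has exactly $pn$ pairwise non-isomorphic indecomposable summands. No real obstacle is anticipated, since the argument is purely combinatorial once Corollary \ref{coro3} is in hand; the only mild subtlety is recording that different regions $\Pi_k$ give genuinely distinct diagonals of $\Pi^p$, which is immediate from the notation $(i,j,k)$.
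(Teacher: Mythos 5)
Your proof is correct and follows exactly the route the paper intends: Corollary \ref{coro4} is stated as an immediate consequence of Corollary \ref{coro3} (citing \cite{L1}) with no written proof, and your count ($n$ diagonals in a triangulation of the $(n+3)$-gon $\Pi$, $p$ pairwise disjoint rotated copies $\XX_k=\rho^{k-1}(\XX_1)$, and the bijection of Lemma \ref{lem1} between diagonals and indecomposables) supplies precisely the routine details that are left implicit.
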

\begin{example}
Let $n=3,p=4$, $X=\add((1,3,1)\oplus(2,4,3))$ and $Y=\add((1,3,1)\oplus(1,4,1)\oplus(1,5,1)\oplus(1,3,2)\oplus(1,4,2)\oplus(1,5,2)\oplus(1,3,3)\oplus(1,4,3)
\oplus(1,5,3)\oplus(1,3,4)\oplus(1,4,4)\oplus(1,5,4))$. Then $X$ is a rigid subcategory of $\C_{3,4}$ by Proposition \ref{prop2}, and $Y$ is a cluster tilting subcategory of $\C_{n,p}$ by Corollary \ref{coro3}.

Note that the rigid subcategory $X$ can not become a cluster tilting subcategory by adding some rigid objects, i.e.  $X$ is not a subcategory of any cluster tilting subcategory.
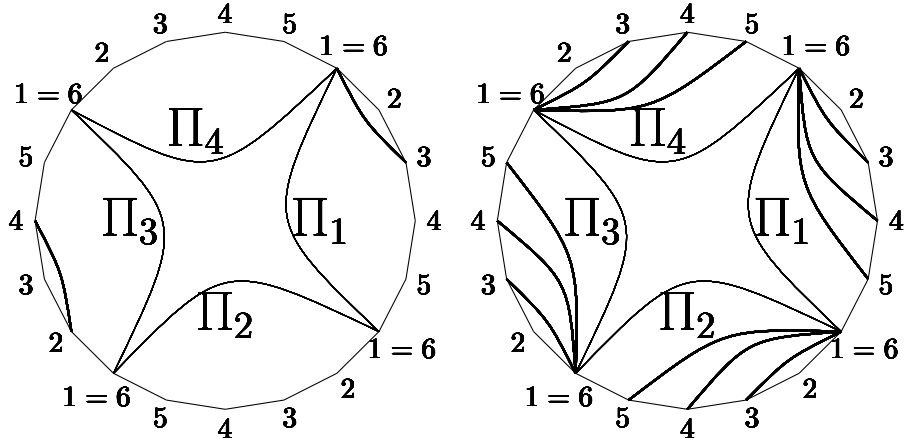
\begin{figure}
\begin{center}
\begin{tikzpicture}[scale=2.5]
\begin{scope}
    \foreach \x in {-18,0,18,36,54,72,90,108,126,144,162,180,198,216,234,252,270,288,306,324}{
        \draw (\x:1 cm) -- (\x + 18: 1cm) -- cycle;
        \draw [thick](54:1cm)..controls(36:0.9cm)..(18:1cm);
        \draw [thick](216:1cm)..controls(198:0.9cm)..(180:1cm);
        \node at (36:1.1cm) {$2$};
        \node at (18:1.1cm) {$3$};
        \node at (0:1.1cm) {$4$};
        \node at (-18:1.1cm) {$5$};
        \node at (324:1.15cm) {$1=6$};
        \node at (306:1.1cm) {$2$};
        \node at (288:1.1cm) {$3$};
        \node at (270:1.1cm) {$4$};
        \node at (252:1.1cm) {$5$};
        \node at (234:1.15cm) {$1=6$};
        \node at (216:1.1cm) {$2$};
        \node at (198:1.1cm) {$3$};
        \node at (180:1.1cm) {$4$};
        \node at (162:1.1cm) {$5$};
        \node at (144:1.15cm) {$1=6$};
        \node at (126:1.1cm) {$2$};
        \node at (108:1.1cm) {$3$};
        \node at (90:1.1cm) {$4$};
        \node at (72:1.1cm) {$5$};
        \node at (54:1.15cm) {$1=6$};
 \draw (54:1cm)..controls(0:0.2cm)..(324:1cm); 
  \draw (324:1cm)..controls(270:0.2cm)..(234:1cm);  
   \draw (234:1cm)..controls(180:0.2cm)..(144:1cm); 
   \draw (144:1cm)..controls(108:0.2cm)..(54:1cm);
   \node at (0:0.5cm){\huge{$\Pi_1$}};  
   \node at (270:0.5cm){\huge{$\Pi_2$}}; 
     \node at (180:0.5cm){\huge{$\Pi_3$}};  
   \node at (108:0.5cm){\huge{$\Pi_4$}};    
        }
\end{scope}
\end{tikzpicture}
\begin{tikzpicture}[scale=2.5][auto]
\begin{scope}
    \foreach \x in {-18,0,18,36,54,72,90,108,126,144,162,180,198,216,234,252,270,288,306,324}{
        \draw (\x:1 cm) -- (\x + 18: 1cm) -- cycle;
         \draw [thick](54:1cm)..controls(36:0.9cm)..(18:1cm);
         \draw [thick](54:1cm)..controls(27:0.7cm)..(0:1cm);
         \draw [thick](54:1cm)..controls(18:0.6cm)..(-18:1cm);
          \draw [thick](234:1cm)..controls(216:0.9cm)..(198:1cm);
         \draw [thick](234:1cm)..controls(207:0.7cm)..(180:1cm);
         \draw [thick](234:1cm)..controls(198:0.6cm)..(162:1cm);
         \draw [thick](144:1cm)..controls(126:0.9cm)..(108:1cm);
         \draw [thick](144:1cm)..controls(117:0.7cm)..(90:1cm);
         \draw [thick](144:1cm)..controls(108:0.6cm)..(72:1cm);
          \draw [thick](324:1cm)..controls(306:0.9cm)..(288:1cm);
         \draw [thick](324:1cm)..controls(297:0.7cm)..(270:1cm);
         \draw [thick](324:1cm)..controls(288:0.6cm)..(252:1cm);
        \node at (36:1.1cm) {$2$};
        \node at (18:1.1cm) {$3$};
        \node at (0:1.1cm) {$4$};
        \node at (-18:1.1cm) {$5$};
        \node at (324:1.15cm) {$1=6$};
        \node at (306:1.1cm) {$2$};
        \node at (288:1.1cm) {$3$};
        \node at (270:1.1cm) {$4$};
        \node at (252:1.1cm) {$5$};
        \node at (234:1.15cm) {$1=6$};
        \node at (216:1.1cm) {$2$};
        \node at (198:1.1cm) {$3$};
        \node at (180:1.1cm) {$4$};
        \node at (162:1.1cm) {$5$};
        \node at (144:1.15cm) {$1=6$};
        \node at (126:1.1cm) {$2$};
        \node at (108:1.1cm) {$3$};
        \node at (90:1.1cm) {$4$};
        \node at (72:1.1cm) {$5$};
        \node at (54:1.15cm) {$1=6$};
 \draw (54:1cm)..controls(0:0.2cm)..(324:1cm); 
  \draw (324:1cm)..controls(270:0.2cm)..(234:1cm);  
   \draw (234:1cm)..controls(180:0.2cm)..(144:1cm); 
   \draw (144:1cm)..controls(108:0.2cm)..(54:1cm);
   \node at (0:0.5cm){\huge{$\Pi_1$}};  
   \node at (270:0.5cm){\huge{$\Pi_2$}}; 
     \node at (180:0.5cm){\huge{$\Pi_3$}};  
   \node at (108:0.5cm){\huge{$\Pi_4$}};    
        }
\end{scope}
\end{tikzpicture}
\caption{The subcategories $X$ (on the left) and $Y$ (on the right) of $\C_{n,p}$}
\label{001}
\end{center}
\end{figure}
\end{example}

\subsection{Relationship with the cluster category of type $A_{n}$}
When $p=1$, $\C_{n,1}$ (denote by $\C_{n}$) is the classical cluster category of type $A_{n}$. In this subsection, we use our classification of torsion pairs in
$\C_{n,p}$ to recover one main result in \cite{HJR1}, which gives a correspondence between torsion pairs and Ptolemy diagrams of the $(n+3)$-gon.

Note that when $p=1$, the definition of Ptolemy diagram in Definition \ref{c2} reduces to the following case.
\begin{definition}
Suppose $p=1$. Let $\mathfrak{U}$  be a set of diagonals in $\Pi$. Then $\mathfrak{U}$ is called a
Ptolemy diagram if it has the following property: when $X=(i,j,1)$ and $Y=(i^\prime,j^\prime,1)$
are crossing diagonals in $\Pi$ (so $i<i^\prime<j<j^\prime$ or $i^\prime<i<j^\prime<j$ by Definition \ref{cross}), then 
\begin{itemize}
\item [(i)] those of $(i,i^\prime,1)$, $(i,j^\prime,1)$, $(i^\prime,j,1)$, $(j,j^\prime,1)$ which are diagonals of $\Pi$ are also in $\mathfrak{U}$ if $ i<i^\prime<j<j^\prime$.
\item [(ii)] those of $(i^\prime,i,1)$, $(i^\prime,j,1)$, $(i,j^\prime,1)$, $(j^\prime,j,1)$ which are diagonals of $\Pi$ are also in $\mathfrak{U}$ if $i^\prime<i<j^\prime<j$.
\end{itemize}
\end{definition}
In this case, the definition of Ptolemy diagram is compatible with Definition 2.1 in \cite{HJR1}. So we have the following result by Theorem \ref{mainresult}.
\begin{corollary}\cite[Theorem A]{HJR1}
Let $\X$ be a subcategory of $\C_{n}$, and $\XX$ be the corresponding set of diagonals in the $(n+3)$-gon. Then the following statements are equivalent:
\begin{itemize}
  \item [(1)] $(\X,\X^\perp)$ is a torsion pair of $\C_{n}$.
  \item [(2)] $\XX$ is a Ptolemy diagram.
\end{itemize}
\end{corollary}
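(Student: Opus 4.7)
The plan is to derive this corollary as a direct specialization of Theorem \ref{mainresult} to the case $p=1$, so the proof is essentially a matter of bookkeeping.

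First, I would observe that when $p=1$ the repetitive polygon $\Pi^1$ reduces to the single $(n+3)$-gon $\Pi = \Pi_1$, every diagonal of $\Pi^1$ has redundant region index $\ell = 1$, and $\C_{n,1} = \C_n$ is the classical cluster category. Under this identification, the bijection of Lemma \ref{lem1} recovers the Caldero--Chapoton--Schiffler correspondence between indecomposable objects of $\C_n$ and diagonals of $\Pi$, and the definition of $\XX^\perp$ used in Theorem \ref{mainresult} matches the usual model of $\X^\perp[-1]$.

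Second, I would verify that the definitions collapse correctly. In Definition \ref{cross}, cases (2) and (3) require $\ell' = \ell \pm 1$, which is impossible when $p=1$; only case (1) survives and coincides with the combinatorial crossing of diagonals in a polygon. In Definition \ref{c2}, conditions (Pt2) and (Pt3) concern diagonals in distinct regions $\Pi_\ell, \Pi_{\ell+1}$ and become vacuous when $p=1$, leaving only (Pt1). Its two subcases (i) and (ii) correspond to the orderings $i<i'<j<j'$ and $i'<i<j'<j$ of a crossing, and together they produce precisely the quadruple-closure condition of \cite[Definition 2.1]{HJR1}. This is exactly the reformulated Ptolemy condition stated just above the corollary.

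Third, I would invoke the equivalence of (2) and (4) in Theorem \ref{mainresult}: $(\X,\X^\perp)$ is a torsion pair in $\C_n$ if and only if the associated set of diagonals $\XX$ is a Ptolemy diagram of the $(n+3)$-gon. Since the main theorem has already been established, no genuine obstacle remains; the only step requiring care is the combinatorial identification of the $p=1$ reduction of (Pt1) with the Ptolemy condition of \cite[Definition 2.1]{HJR1}, which is a straightforward comparison of the two orderings of a crossing.
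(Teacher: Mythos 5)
Your proposal is correct and follows essentially the same route as the paper: specialize Theorem \ref{mainresult} to $p=1$, noting that the crossing conditions (2)--(3) and the Ptolemy conditions (Pt2)--(Pt3) are vacuous in a single region, so that (Pt1) reduces exactly to the Ptolemy condition of Holm--J{\o}rgensen--Rubey. No gaps; the only substantive check, the identification of the $p=1$ reduction with their Definition 2.1, is exactly the point the paper also records before invoking the main theorem.
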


\bigskip



\end{document}